\newcommand{\midarrowa}{\tikz \draw[-triangle 45] (0,0) -- +(.1,0);}  
\tikzset{every picture/.style={line width=0.9pt}}
\providecommand{\keywords}[1]
{
  \small	
  \textbf{\textit{Keywords---}} #1
}
 	\author{Jasine Babu, Deepu Benson, Deepak Rajendraprasad and Sai Nishant Vaka \\
        \small Indian Institute of Technology Palakkad \\}
\date{}
	\newtheorem{theorem}{Theorem}
	\newtheorem{corollary}[theorem]{Corollary}
	\newtheorem{lemma}[theorem]{Lemma}
	\theoremstyle{definition}
	\theoremstyle{remark}
	\newtheorem{remark}{Remark}
	\def\--{\mbox{--}}
	\newcommand{\ceil}[1]{\left\lceil #1 \right\rceil}
	\newcommand{\floor}[1]{\left\lfloor #1 \right\rfloor}
	\newcommand{\orient}[1]{\vec{#1}}
\DeclareRobustCommand*{\ora}{\vec}
\title{An Improvement to Chv{\'a}tal and Thomassen's Upper Bound for Oriented Diameter}
\begin{document}  

\maketitle
\begin{abstract}
An orientation of an undirected graph $G$ is an assignment of exactly one direction to each edge of $G$. The oriented diameter of a graph $G$ is the smallest diameter among all the orientations of $G$. The maximum oriented diameter of a family of graphs $\mathscr{F}$ is the maximum oriented diameter among all the graphs in $\mathscr{F}$. Chv{\'a}tal and Thomassen [JCTB, 1978] gave a lower bound of $\frac{1}{2}d^2+d$ and an upper bound of $2d^2+2d$ for the maximum oriented diameter of the family of $2$-edge connected graphs of diameter $d$. We improve this upper bound  to $ 1.373 d^2 + 6.971d-1 $, which outperforms the former upper bound for all values of $d$ greater than or equal to $8$. For the family of $2$-edge connected graphs of diameter $3$, Kwok, Liu and West [JCTB, 2010] obtained improved lower and upper bounds of $9$ and $11$ respectively. For the family of $2$-edge connected graphs of diameter $4$, the bounds provided by Chv{\'a}tal and Thomassen are $12$ and $40$ and no better bounds were known. By extending the method we used for diameter $d$ graphs, along with  an asymmetric extension of a technique used by Chv{\'a}tal and Thomassen, we have improved this upper bound to $21$.
\end{abstract} \hspace{10pt}

\keywords{Oriented diameter, Strong orientation, One-way traffic problem}

\section{Introduction}

An \emph{orientation} of an undirected graph $G$ is an assignment of exactly one direction to each of the edges of $G$. A given undirected graph can be oriented in many different ways ($2^m$, to be precise, where m is the number of edges). The studies on graph orientations often concern with finding orientations which achieve a predefined objective. Some of the objectives while orienting graphs include minimization of certain distances, ensuring acyclicity, minimizing the maximum in-degree, maximizing connectivity, etc. One of the earliest studies regarding graph orientations were carried out by H.E. Robbins in 1939. He was trying to answer a question posed by Stanislaw Ulam. ``\textit{When may the arcs of a graph be so oriented that one may pass from any vertex to any other, traversing arcs in the positive sense only?}". This led to a seminal work \cite{robbins1939theorem}  of Robbins in which he proved the following theorem, ``\textit{A graph is orientable if and only if it remains connected after the removal of any arc}"'. 

A directed graph $\ora{G}$ is called \emph{strongly connected} if it is possible to reach any vertex starting from any other vertex using a directed path. An undirected graph $G$ is called \emph{strongly orientable} if it has a strongly connected orientation. A \emph{bridge} in a connected graph is an edge whose removal will disconnect the graph. A \emph{$2$-edge connected} graph is a connected graph which does not contain any bridges.  The theorem of Robbins stated earlier says that it is possible for a graph $G$ to be strongly oriented if and only if $G$ is $2$-edge connected. Though Robbins stated the necessary and sufficient conditions for a graph to have a strong orientation, no comparison between the diameter of a graph and the diameter of an orientation of this graph was given in this study. This was taken up by Chv{\'a}tal and Thomassen in 1978 \cite{chvatal1978distances}.  

In order to discuss these quantitative results, we introduce some notation. Let $G$ be an undirected graph. The \emph{distance} between two vertices $u$ and $v$ of $G$, $d_G(u,v)$ is the number of edges in a shortest path between $u$ and $v$. For any two subsets $A$, $B$ of $V(G)$, let $d_G(A,B) = \min\{d_G(u,v) : u \in A, v \in B \}$. The \emph{eccentricity} of a vertex $v$ of $G$ is the maximum distance between $v$ and any other vertex $u$ of $G$. The \emph{diameter} of $G$ is the maximum of the eccentricities  of its vertices. The \emph{radius} of $G$ is the minimum  of the eccentricities of its vertices. Let $\ora{G}$ be a directed graph and $u,v \in V(\ora{G})$. Then the \emph{distance} from a vertex $u$ to $v$, $d_{\ora{G}}(u,v)$, is defined as the length of a shortest directed path from $u$ to $v$. For any two subsets $A$, $B$ of $V(\ora{G})$, let $d_{\ora{G}}(A,B)=\min\{d_{\ora{G}}(u,v) : u \in A, v \in B \}$. The \emph{out-eccentricity} of a vertex $v$ of $\ora{G}$ is the greatest distance from $v$ to a vertex $u \in V(\ora{G})$. The \emph{in-eccentricity} of a vertex $v$ of $\ora{G}$ is the greatest distance from a vertex $u \in V(\ora{G})$ to $v$. The \emph{eccentricity} of a vertex $v$ of $\ora{G}$ is the maximum of its out-eccentricity and in-eccentricity. The \emph{diameter} of  $\ora{G}$, denoted by $d(\ora{G})$, is the maximum of the eccentricities  of its vertices. The \emph{radius} of $\ora{G}$ is the minimum  of the eccentricities of its vertices. The \emph{oriented diameter} of an undirected graph $G$, denoted by $\ora{d}(G)$, is the smallest diameter among all strong orientations of $G$. That is, $ \ora{d}(G)  := \text{min}\{d(\ora{G}): \ora{G} \text{ is an orientation of } $G$\} $. The \emph{oriented radius} of an undirected graph $G$ is the smallest radius among all strong orientations of $G$. The maximum oriented diameter of the family $\mathscr{F}$ of graphs is the maximum oriented diameter among all the graphs in $\mathscr{F}$. Let $f(d)$ denote the maximum oriented diameter of the family of $2$-edge connected diameter $d$ graphs. That is, $ f(d)  := \text{max}\{\ora{d}(G): G \in \mathscr{F}\} $, where $\mathscr{F}$ is the family of $2$-edge connected graphs with diameter $d$. 

The following theorem by Chv{\'a}tal and Thomassen \cite{chvatal1978distances} gives an upper bound for the oriented radius of a graph.
\begin{theorem}\label{tt1}\cite{chvatal1978distances}
Every 2-edge connected graph of radius $r$ admits a strong orientation of radius at most $r^2+r$.
\end{theorem}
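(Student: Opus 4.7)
The plan is to fix a vertex $c$ of eccentricity $r$ (a center of $G$) and construct a strong orientation $\ora{G}$ in which $c$ has both in-eccentricity and out-eccentricity at most $r^2+r$. This gives the radius bound, since then $c$ witnesses the radius in $\ora{G}$. I would work throughout with a BFS tree $T$ rooted at $c$, which partitions $V(G)$ into level sets $L_0=\{c\}, L_1, \ldots, L_r$.

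The construction I envision proceeds incrementally via an open ear decomposition $H_0 \subset H_1 \subset \cdots \subset H_m = G$ in which the ear order respects the BFS structure. Take $H_0$ to be a cycle through $c$ of length at most $2r$; such a cycle exists because $G$ is 2-edge connected, so any edge incident to $c$ is non-bridge and has a closing path back to $c$ of length at most $2r$ via $T$. Orient $H_0$ as a directed cycle, so $c$ lies at in- and out-distance at most $2r$ from every vertex in $H_0$. At each subsequent stage $i$, I would pick an ear $P_i$ whose endpoints lie in $H_{i-1}$ at the smallest possible BFS levels, and orient $P_i$ in whichever of its two directions is more favorable for keeping $c$ close to its interior. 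Since $H_{i-1}$ is already strongly connected, so is $H_i = H_{i-1} \cup P_i$; the choice of direction on each ear is the key degree of freedom.

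The main technical step is an invariant bounding, for each $u \in V(H_i)$, the quantities $d_{H_i}(c,u)$ and $d_{H_i}(u,c)$. The target is that both are at most $k \cdot r + r$, where $k$ is the \emph{ear depth} of $u$, meaning the smallest index $j$ with $u \in H_j$. Combined with a separate argument that the ear depth of any vertex is at most $r$ (using the fact that a BFS-respecting greedy order can only nest $r$ times before all levels are exhausted), this yields $d_{\ora{G}}(c,u), d_{\ora{G}}(u,c) \leq r^2+r$.

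The hard part will be combining two non-trivial structural claims: first, that such a BFS-respecting ear decomposition of depth at most $r$ exists in every 2-edge connected graph of radius $r$; and second, that each individual ear can be chosen short enough that its contribution to the distance from $c$ is only $O(r)$ rather than $O(r^2)$. For the first, my plan is a greedy rule that always extends by the ear whose endpoints are at the smallest currently available BFS levels (and among such, the shortest). For the second, I would argue that every non-tree edge together with tree paths closes a cycle of length at most $2r+1$, and use this to cap each ear's length, with careful orientation direction chosen so that the induced detour back to $c$ cancels rather than compounds. I expect the tightness of the constants, rather than the overall quadratic scaling, to be the delicate point, so if the direct analysis yields a slightly weaker bound I would refine either the ordering rule or the per-ear direction choice to recover the exact $r^2+r$.
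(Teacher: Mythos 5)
Your plan has the right general shape (incremental ear additions, orient each ear as a directed path, control the center's in- and out-eccentricity), but the two claims you yourself flag as ``the hard part'' are precisely where the proof lives, and neither is established by the mechanism you describe. First, the depth bound. As you define it, the ear depth of $u$ is the index of the first $H_j$ containing $u$; that index ranges up to the total number of ears, i.e.\ the cycle rank of $G$, which is not bounded by any function of $r$. You presumably intend some nesting depth, but the greedy rule ``attach at the smallest available BFS levels'' gives no reason why that depth is at most $r$. The mechanism that actually bounds the number of rounds by $r$ in Chv\'atal and Thomassen's argument (restated in the paper as Lemma~\ref{lemmaCT}) is different: at each round one attaches enough ears to capture the \emph{entire} neighbourhood $N(B)$ of the current oriented vertex set $B$, so the domination radius of $B$ drops by exactly one per round, from $r$ down to $0$ in $r$ rounds. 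Second, the per-ear contribution. Even granting depth at most $r$, a uniform cap of $2r+1$ on ear length gives each interior vertex a detour of up to $2r$ (not $r$) past its attachment points, so your invariant would come out as roughly $2r^2$, not $r^2+r$. The exact bound is not a matter of tightening constants: it comes from the ear lengths \emph{decreasing} with the round index. An ear attached to a $k$-step dominating set $B$ --- first interior vertex a neighbour of $B$, remainder a shortest path back to $B$ in $G/B$ minus the initial edge --- has length at most $2k+1$ and keeps its interior within $2k$ of $B$ in both directions, and the telescoping sum $2r+2(r-1)+\cdots+2=r(r+1)$ is what yields $r^2+r$.

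Two further points you would need in any case. Your ``choose the more favourable direction'' step is a red herring: with both endpoints of an ear already in the oriented part, either direction controls both $d(c,u)$ and $d(u,c)$ for interior $u$; what matters is that the ear is short and that its orientation is \emph{forced} to be consistent when it shares edges with previously oriented ears. The paper handles this by rerouting a new ear along the shorter arm of any earlier ear it meets (see the proof of Lemma~\ref{lemmaACT}), which simultaneously preserves the length bound and avoids orientation conflicts; your sketch has no analogue. Finally, your starting cycle through $c$ of length at most $2r$ need not exist (the shortest cycle through an edge at $c$ can have length $2r+1$), though that only costs an additive constant. I would restructure the argument around the domination-radius induction: prove the single-round lemma (ears of length $\leq 2k+1$ attached to a $k$-step dominating set, producing a $(k-1)$-step dominating oriented subgraph with two-way distance $\leq 2k$ to the old one) and apply it $r$ times starting from $B=\{c\}$.
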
    
The above bound was also shown to be tight. In the same paper, they also proved that the problem of deciding whether an undirected graph admits an orientation of diameter $2$ is NP-hard. Motivated by the work of  Chv{\'a}tal and Thomassen \cite{chvatal1978distances}, Chung, Garey and Tarjan \cite{chung1985strongly} proposed a linear-time algorithm to check whether a mixed multigraph has a strong orientation or not. They have also proposed a polynomial time algorithm which provides a strong orientation (if it exists) for a mixed multigraph with oriented radius at most $4r^2 + 4r$. Studies have also been carried out regarding the oriented diameter of specific subclasses of graphs like AT-free graphs, interval graphs, chordal graphs and planar graphs  \cite{fomin2004free,fomin2004complexity,eggemann2009minimizing}. Bounds on oriented diameter in terms of other graph parameters like minimum degree and maximum degree are also available in literature   
\cite{bau2015diameter, surmacs2017improved,  erdHos1989radius, dankelmann2016diameter}.

The following bounds for $f(d)$ were given by Chv{\'a}tal and Thomassen \cite{chvatal1978distances}.

\begin{theorem}\label{CTboundD}\cite{chvatal1978distances}
$ \frac{1}{2} d^{2} + d \leq f(d) \leq 2d^{2} + 2d $. 
\end{theorem}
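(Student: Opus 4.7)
The plan is to handle the two inequalities separately, and the work is very unevenly distributed between them.

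For the upper bound $f(d) \le 2d^2 + 2d$, the argument is essentially a one-line corollary of Theorem~\ref{tt1}. Let $G$ be a $2$-edge connected graph of diameter $d$ and radius $r$. Since $r \le d$ always, Theorem~\ref{tt1} produces a strong orientation $\ora{G}$ with radius at most $r^2 + r \le d^2 + d$. Because the diameter of any (directed) graph is at most twice its radius, $d(\ora{G}) \le 2(d^2+d) = 2d^2+2d$, giving the upper half of Theorem~\ref{CTboundD} without any further construction.

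The lower bound $f(d) \ge \tfrac{1}{2}d^2 + d$ requires exhibiting, for each $d$, a $2$-edge connected graph $G_d$ of diameter exactly $d$ such that every strong orientation of $G_d$ has diameter at least $\tfrac{1}{2}d^2 + d$. My plan is to use a ``many parallel paths'' construction: take two distinguished vertices $u,v$ joined by a carefully chosen number $k$ of internally disjoint $u$--$v$ paths of length $d$, adjusting the internal structure (a few chords or a pendant cycle at $u$ and $v$) only as much as is needed to guarantee $2$-edge connectivity and diameter exactly $d$. In any strong orientation, every $u$--$v$ path is either monotonically ``forward'' from $u$ to $v$ or monotonically ``backward'' from $v$ to $u$, for otherwise it contains an internal vertex whose only exit or only entry is through $u$ or $v$, which forces an even longer excursion and only strengthens the bound. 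Assuming $a$ forward paths and $b=k-a$ backward paths, I would then choose an internal vertex $x$ sitting close to $v$ on a forward path and an internal vertex $y$ sitting close to $v$ on a backward path: any directed $x$--$y$ walk must exit the forward path at $v$, traverse a backward path essentially in its entirety to reach $u$, and then climb almost all of a forward path to reach $y$; summing the lengths yields roughly $\tfrac{1}{2}d^2 + d$ after the optimal choice of parameters.

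The main obstacle is the extremal analysis. One has to (i) pick $k$ and the auxiliary structure so that the diameter is exactly $d$ and not smaller, (ii) rule out orientations where several paths are ``mixed'' in a way that creates unexpected shortcuts, and (iii) balance the choice of $x$ and $y$ against every possible split $(a,b)$ so that the worst pair of vertices is forced to be at directed distance at least $\tfrac{1}{2}d^2 + d$ for every strong orientation, simultaneously for even and odd $d$. The upper bound is essentially free from Theorem~\ref{tt1}; all the difficulty of Theorem~\ref{CTboundD} lies in the construction and the orientation-by-orientation argument for the lower bound.
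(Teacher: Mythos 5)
First, a point of reference: the paper does not prove Theorem~\ref{CTboundD} at all; it is quoted from Chv\'atal and Thomassen, so there is no internal proof to compare against. Your upper-bound half is fine and is exactly the derivation the paper itself uses repeatedly in other places: apply Theorem~\ref{tt1} to get a strong orientation of radius at most $r^2+r\leq d^2+d$, and then use the fact that for a strongly connected digraph the diameter is at most twice the radius (route any $x$--$y$ pair through a central vertex). No issue there.

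The lower-bound half has a genuine gap: the ``many parallel paths'' construction cannot produce a quadratic lower bound. Your monotonicity observation is correct (an internal vertex of a chordless $u$--$v$ path whose two path-edges disagree would be a source or a sink of the whole digraph), so each path is oriented entirely forward or entirely backward, and at least one of each is needed for strong connectivity. But then for \emph{any} two vertices $x,y$, a directed $x$--$y$ walk needs at most one detour: go forward along $x$'s path to whichever of $u,v$ it exits at, cross to the other endpoint along one full path if necessary, and proceed to $y$. That is at most about $3d$ edges, so the oriented diameter of this graph is $O(d)$, not $\Omega(d^2)$. The step where you ``sum the lengths'' is summing a constant number of terms each of size at most $d$; the number $k$ of parallel paths never enters the length of any shortest directed path, so no choice of $k$, of chords, or of the pair $(x,y)$ can rescue the estimate. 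To reach $\tfrac{1}{2}d^2+d \approx d+(d-1)+\cdots+1$ one needs a construction that forces roughly $d$ \emph{nested} detours of decreasing lengths summing to $\Theta(d^2)$ --- this is what Chv\'atal and Thomassen's recursive extremal examples do --- and a single two-terminal parallel structure provides only one forced detour. So the upper bound is complete, but the lower bound as proposed would fail.
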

Chv{\'a}tal and Thomassen \cite{chvatal1978distances} has also proved that $f(2) = 6$. By Theorem~\ref{CTboundD}, $8 \leq f(3) \leq 24$. In 2010, Kwok, Liu and West \cite{kwok2010oriented} improved these bounds to $9 \leq f(3) \leq 11$. To prove the upper bound of $11$, Kwok, Liu and West partitioned the vertices of $G$ into a number of sets based on the distances from the endpoints of an edge which is not part of any $3$-cycle. Our study on the oriented diameter of $2$-edge connected graphs with diameter $d$ uses this idea of partitioning the vertex set into a number of sets based on their distances from a specific edge.

\subsection*{Our Results}

In this paper we establish two improved upper bounds. Firstly in Section 2, we show that $f(d) \leq 1.373d^2 + 6.971d - 1$ (Theorem~\ref{betterthanct}). This is the first general improvement to Chv{\'a}tal and Thomassen's upper bound $f(d) \leq 2d^2 + 2d$ from 1978. For all $d \geq 8$, our upper bound outperforms that of Chv{\'a}tal and Thomassen. Their lower bound $f(d) \geq \frac{1}{2} d^{2} + d$ still remains unimproved. We do not believe that our upper bound is tight.  Secondly in Section 3, for the case of $d = 4$, we further sharpen our analysis and show that $f(4) \leq 21$ (Theorem~\ref{betterfor4}). This is a considerable improvement from $40$, which follows from Chv{\'a}tal and Thomassen's general upper bound. Here too, our upper bound is not yet close to the lower bound of $12$ given by Chv{\'a}tal and Thomassen and we believe that there is room for improvement in the upper bound. 
  
\section{Oriented Diameter of Diameter $\boldsymbol{d}$ Graphs}
A subset $D$ of the vertex set of $G$ is called a \emph{$k$-step dominating set} of $G$ if every vertex not in $D$ is at a distance of at most $k$ from at least one vertex of $D$. An oriented subgraph $\ora{H}$ of $G$ is called a $k$-step dominating oriented subgraph if $V(\ora{H})$ is a $k$-step dominating set of $V(G)$. To obtain upper bounds for the oriented diameter of a graph $G$ with $n$ vertices and minimum degree $\delta \geq 2$, Bau and Dankelmann \cite{bau2015diameter} and Surmacs \cite{surmacs2017improved} first constructed a $2$-step dominating oriented subgraph $\ora{H}$ of $G$. They used this together with the idea in the proof of Theorem~\ref{tt1} on $\ora{H}$ to obtain the upper bounds of $\frac{11n}{\delta+1}+9$ and $\frac{7n}{\delta +1}$, respectively, for the oriented diameter of graphs with minimum degree $\delta \geq 2$. We are using the algorithm {\sc OrientedCore} described below to produce a $2$-edge connected oriented subgraph $\ora{H}$ of $G$ with some distance guarantees between the vertices in $\ora{H}$ (Lemma~\ref{lemmaDistancesInCore}) and some domination properties (Lemma~\ref{lemmaDominationByCore}).

\subsection{Algorithm {\sc OrientedCore}}

\newcommand{\emsec}[1]{
			\medskip
			\noindent
			\emph{#1}
			}

\emsec{Input:} A $2$-edge connected graph $G$ and a specified edge $pq$ in $G$.

\emsec{Output:} A $2$-edge connected oriented subgraph $\orient{H}$ of $G$.

\emsec{Terminology:} Let $d$ be the diameter of $G$, let $k$ be the length of a smallest cycle
	containing $pq$ in $G$ and let $h = \floor{k/2}$. Notice that $k \leq 2d+1$ and $h \leq d$. Define $S_{i,j} = \{v \in
	V(G) : d_G(v,p) = i, d_G(v,q) = j\}$. Since $S_{i,j}$ is non-empty only if $0
	\leq i, j \leq d$ and $|i-j| \leq 1$, we implicitly assume these
	restrictions on the subscripts of $S_{i,j}$ wherever we use it. For a
	vertex $v \in S_{i,j}$, its \emph{level} $L(v)$ is $(j-i)$ and its
	\emph{width} $W(v)$ is $\max(i,j)$. We will always refer to an edge $\{u,v\}$ between two different $S_{i,j}$'s as $uv$ when either $L(u) > L(v)$ or $L(u) =
	L(v)$ and $W(u) < W(v)$ (downward or rightward in Fig.~\ref{fig1}).  Moreover
	the edge $uv$ is called {\em vertical} in the first case and {\em
	horizontal} in the second. 

\emsec{Observations based on the first edge of shortest paths from a
vertex $v$ to $p$ or $v$ to $q$:} Every vertex $v \in S_{i,i+1}$, $1 \leq i \leq d-1$, is incident to a
	horizontal edge $uv$ with $u \in S_{i-1, i}$.  Every vertex $v \in
	S_{i+1,i}$, $1 \leq i \leq d-1$, is incident to a horizontal edge $uv$ with
	$u \in S_{i,i-1}$.  Every vertex $v \in S_{i,i}$, $1 \leq i \leq d$, is
	incident either to a horizontal edge $uv$ with $u \in S_{i-1,i-1}$ or two
	vertical edges $uv$ and $vx$ with $u \in S_{i-1,i}$ and $x \in S_{i, i-1}$.
	Consequently for any $v$ in Level $1$, all the shortest $p\--v$ path consists
	of Level $1$ horizontal edges only and for any vertex in $v$ in Level $1$, all
	the shortest $v\--q$ path consists of Level $-1$ horizontal edges alone. For
	any vertex $v$ in Level $0$, all the shortest $p\--v$ path consists of
	horizontal edges in levels $1$ and $0$ and exactly one vertical edge; while all
	the shortest $v\--q$ path consists of horizontal edges in levels $0$ and $-1$
	and exactly one vertical edge.

\emsec{Stage $1$.} Initialise $\orient{H}$ to be empty.  For each vertical edge $uv$ with
	$L(u) = 1$ and $L(v) \in \{0,-1\}$, and for each shortest $p\--u$ path
	$P_u$ and shortest $v\--q$ path $P_v$, do the following: Let $P$ be the
	$p\--q$ path formed by joining $P_u$, the edge $uv$ and $P_v$. Orient the
	path $P$ as a directed path $\orient{P}$ from $p$ to $q$ and add it to
	$\orient{H}$. Notice that even though two such paths can share edges, there
	is no conflict in the above orientation since, in Stage~$1$, every vertical
	edge is oriented downward, every horizontal edge in Level~$1$ is oriented
	rightward and every horizontal edge in levels $0$ and $-1$ is oriented
	leftward. 

\emsec{Stage~$2$.} For each vertical edge $uv$ with $L(u) = 0$ and $L(v) = -1$ not already oriented in Stage~$1$, and for each
	shortest $p\--u$ path $P_u$ and shortest $v\--q$ path $P_v$ do the
	following: Let $x$ be the last vertex in $P_u$ (nearest to $u$) that is
	already in $V(\orient{H})$ and let $P_u'$ be the subpath of $P_u$ from $x$
	to $u$.  Similarly let $y$ be the first vertex in $P_v$ (nearest to $v$)
	that is already in $V(\orient{H})$ and let $P_v'$ be the subpath of $P_v$
	from $v$ to $y$. Let $P$ be the $x\--y$ path formed by joining $P_u'$, the
	edge $uv$ and $P_v'$.  Orient the path $P$ as a directed path $\orient{P}$
	from $x$ to $y$ and add it to $\orient{H}$.  Notice that $P$ does not share
	any edge with a path added to $\orient{H}$ in Stage~$1$, but it can share
	edges with paths added in earlier steps of Stage~$2$. However there is no
	conflict in the orientation since, in Stage~$2$, every vertical edge is
	oriented downward, every horizontal edge in Level~$0$ is oriented
	rightward, every horizontal edge in Level~$-1$ is oriented leftward, and no
	horizontal edges in Level~$1$ is added.

\emsec{Stage~$3$.} Finally orient the edge $pq$ from $q$ to $p$ and add it to $\orient{H}$.
	This completes the construction of $\orient{H}$, the output of the 
	algorithm.

\subsubsection*{Distances in $\boldsymbol{\orient{H}}$}

	First we analyse the (directed) distance from $p$ and to $q$ of vertices
	added to $\orient{H}$ in Stage~$1$.  The following bounds on distances in
	$\orient{H}$ follow from the construction of each path $P$ in Stage~1. Let
	$w$ be any vertex that is added to $\orient{H}$ in Stage~$1$. Then

	\begin{equation}
	\label{eqnStage1Fromp}
	d_{\orient{H}}(p, w) \leq 
		\begin{cases}
			\begin{array}{ll}
				i,		& w \in S_{i,i+1}, \\
				h,		& w \in S_{h,h}, \\
				2d - i,	& w \in S_{i,i},~ i > h, \text{ and} \\
				2d - i,	& w \in S_{i+1,i}. \\
			\end{array}
		\end{cases}
	\end{equation}

	\begin{equation}
	\label{eqnStage1Toq}
	d_{\orient{H}}(w,q) \leq 
		\begin{cases}
			\begin{array}{ll}
				2d - i,	& w \in S_{i,i+1}, \\
				h,		& w \in S_{h,h}, \\
				2d - i,	& w \in S_{i,i},~ i > h, \text{ and} \\
				i,		& w \in S_{i+1,i}. \\
			\end{array}
		\end{cases}
	\end{equation}

	It is easy to verify the above equations using the facts that $w$ is part
	of a directed $p\--q$ path of length at most $2d$ (at most $2h$ if $w \in
	S_{h,h}$) in $\orient{H}$.

	No new vertices from Level~$1$ or $S_{h,h}$ are added to $\orient{H}$ in
	Stage~$2$. Still the distance bounds for vertices added in Stage~$2$ are
	slightly more complicated since a path $P$ added in this stage will start
	from a vertex $x$ in Level~$0$ and end in a vertex $y$ in Level $-1$, which
	are added to $\orient{H}$ in Stage~$1$. But we can complete the analysis
	since we already know that $d_{\orient{H}}(p,x) \leq 2d - h -1$ and
	$d_{\orient{H}}(y, q) \leq i$ where $i$ is such that $y \in S_{i+1,i}$ from the analysis of Stage~1.  Let $w$ be any
	vertex that is added to $\orient{H}$ in Stage~$2$. Then

	\begin{equation}
	\label{eqnStage2Fromp}
	d_{\orient{H}}(p, w) \leq 
		\begin{cases}
				(2d-h- 1) + (i-h-1) 	\\
					\quad= 2d - 2h - 2 + i,	\quad 	w \in S_{i,i},~ i > h, 
					\text{ and} \\
				(2d-h- 1) + (d-h-1) + (d-i)	\\
					\quad= 4d - 2h - 2 - i,	\quad	w \in S_{i+1,i}. \\
		\end{cases}
	\end{equation}

	The distance from $w$ to $q$ in $\orient{H}$ is not affected even though
	we trim the path $P_v$ at $y$ since $y$ already has a directed shortest
	path to $q$ from Stage~$1$. Hence

	\begin{equation}
	\label{eqnStage2Toq}
	d_{\orient{H}}(w,q) \leq 
		\begin{cases}
			\begin{array}{ll}
				2d - i,	& w \in S_{i,i},~ i > h, \text{ and} \\
				i,		& w \in S_{i+1,i}. \\
			\end{array}
		\end{cases}
	\end{equation}

	The first part of the next lemma follows from taking the worst case among
	(\ref{eqnStage1Fromp}) and (\ref{eqnStage2Fromp}). Notice that $\forall i >
	h,~ (2h+2-i \leq i)$ and $(4d - 2h -2 \geq 2d)$ when $h < d$. New
	vertices are added to $\orient{H}$ in Stage~2 only if $h < d$. The second
	part follows from (\ref{eqnStage1Toq}) and (\ref{eqnStage2Toq}). The
	subsequent two claims are easy observations.

\begin{lemma}
\label{lemmaDistancesInCore}
	Let $G$ be a $2$-edge connected graph, $pq$ be any edge of $G$ and let
	$\orient{H}$ be the oriented subgraph of $G$ returned by the algorithm {\sc
	OrientedCore}. Then for every vertex $w \in V(\orient{H})$ we have 

	\begin{equation}
	\label{eqnFromp}
	d_{\orient{H}}(p, w) \leq 
		\begin{cases}
			\begin{array}{ll}
				i,				& w \in S_{i,i+1}, \\
				h,				& w \in S_{h,h}, \\
				2d - 2h - 2 + i,	& w \in S_{i,i},~ i > h, \text{ and} \\
				4d - 2h - 2 - i,	& w \in S_{i+1,i}. \\
			\end{array}
		\end{cases}
	\end{equation}

	\begin{equation}
	\label{eqnToq}
	d_{\orient{H}}(w,q) \leq 
		\begin{cases}
			\begin{array}{ll}
				2d - i,	& w \in S_{i,i+1}, \\
				h,		& w \in S_{h,h}, \\
				2d - i,	& w \in S_{i,i},~ i > h, \text{ and} \\
				i,		& w \in S_{i+1,i}. \\
			\end{array}
		\end{cases}
	\end{equation}

	Moreover, $d_{\orient{H}}(q,p) = 1$ and  $d_{\orient{H}}(p,q) \leq k-1$. 
\end{lemma}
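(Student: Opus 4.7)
The plan is to verify the two displays (\ref{eqnFromp}) and (\ref{eqnToq}) by consolidating the Stage~1 and Stage~2 estimates already worked out in (\ref{eqnStage1Fromp})--(\ref{eqnStage2Toq}), and then to dispatch the two remaining claims by direct appeal to Stages~3 and~1 of the algorithm.

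For (\ref{eqnFromp}), I would split on the stage in which $w$ first enters $\orient{H}$. When $w$ is added only by Stage~1 -- that is, when $w \in S_{i,i+1}$ or $w \in S_{h,h}$ -- the bound (\ref{eqnStage1Fromp}) is already what is claimed. When $w$ is first introduced by Stage~2 we must have $h < d$ and either $w \in S_{i,i}$ with $i > h$ or $w \in S_{i+1,i}$; any upper bound valid for both stage analyses of such a $w$ suffices, so I would simply keep the larger of (\ref{eqnStage1Fromp}) and (\ref{eqnStage2Fromp}). A one-line check gives $2d-2h-2+i \geq 2d-i$ whenever $i \geq h+1$, and $4d-2h-2-i \geq 2d-i$ whenever $h < d$, so in both regimes the Stage~2 expression dominates and the Lemma's right-hand side in (\ref{eqnFromp}) is recovered. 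The bound (\ref{eqnToq}) is easier: (\ref{eqnStage1Toq}) and (\ref{eqnStage2Toq}) coincide wherever both apply, since the Stage~2 path terminates at a vertex $y$ that already possesses, from Stage~1, a directed path to $q$ of exactly the stated length.

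For $d_{\orient{H}}(q,p) = 1$ I would simply invoke Stage~3, which inserts the arc $q \to p$. For $d_{\orient{H}}(p,q) \leq k-1$ I would fix a smallest cycle $C$ through $pq$ in $G$ and set $P := C - pq$, a path from $p$ to $q$ of length $k-1$. Minimality of $C$ forces every interior vertex $u_i$ of $P$ to realise both $d_G(p,u_i)$ and $d_G(q,u_i)$ along $P$, for any shorter off-cycle path to $u_i$ could be spliced with the opposite half of $C$ to yield a cycle through $pq$ of length less than $k$. Consequently $P$ contains a downward vertical edge $uv$ with $L(u)=1$ and $L(v) \in \{0,-1\}$ -- the one straddling the middle of $P$ -- and the portions of $P$ from $p$ to $u$ and from $v$ to $q$ are themselves shortest paths. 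Stage~1 therefore processes this very triple $(P_u, uv, P_v)$ and orients the full concatenation as a directed $p$-to-$q$ path of length $k-1$.

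The main obstacle, such as it is, is pure bookkeeping: one must carefully verify the direction of the inequalities in the stage comparisons above and track that the $2d-h-1$ summand of the Stage~2 analysis for $w \in S_{i,i}$ really is chained through a Stage~1 bound on its launching endpoint $x$, as noted in the discussion preceding the lemma. No new ideas beyond that discussion are needed.
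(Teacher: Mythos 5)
Your proposal follows the paper's own argument essentially verbatim: the paper also obtains (\ref{eqnFromp}) and (\ref{eqnToq}) by taking the worst case of the Stage-1 and Stage-2 estimates, justified by exactly the two inequalities you check (namely $2h+2-i \le i$ for $i > h$, and $4d-2h-2 \ge 2d$ when $h < d$, with Stage~2 adding vertices only if $h<d$), and it dismisses the final two claims as easy observations. Your explicit verification of $d_{\orient{H}}(q,p)=1$ via Stage~3 and of $d_{\orient{H}}(p,q)\le k-1$ via the path $C-pq$ and its middle vertical edge is simply the intended filling-in of those observations, so the two approaches coincide.
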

We can see that if $S_{h,h}$ is non-empty, then all the vertices in $S_{h,h}$ are captured into $\ora{H}$.  

Notice that when $k \geq 4$, $S_{1,2}$ and $S_{2,1}$ are non empty.
Thus the bound on the diameter of
	$\orient{H}$ follows by the triangle inequality $d_{\orient{H}}(x,y) \leq
	d_{\orient{H}}(x, q) + d_{\orient{H}}(q,p) + d_{\orient{H}}(p,y)$ and the
	fact that $\forall k \geq 4$ the worst bounds for $d_{\orient{H}}(x, q)$ and
	$d_{\orient{H}}(p, y)$ from Lemma~\ref{lemmaDistancesInCore} are when $x \in S_{1,2}$ and $y \in S_{2,1}$. 
Hence the following corollary. 
\begin{corollary}\label{ddcorollary}
Let $G$ be a $2$-edge connected graph, $pq$ be any edge of $G$ and let
	$\orient{H}$ be the oriented subgraph of $G$ returned by the algorithm {\sc
	OrientedCore}. If the length of the smallest cycle containing $pq$ is greater than or equal to $4$,
then the diameter of $\orient{H}$ is at most $6d-2h-3$.
\end{corollary}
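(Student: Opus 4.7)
The plan is to reduce the problem to a triangle-inequality calculation through the pair $(q,p)$, exploiting the fact that Lemma~\ref{lemmaDistancesInCore} already gives $d_{\orient{H}}(q,p) = 1$. For any two vertices $x, y \in V(\orient{H})$, I would write
\[
d_{\orient{H}}(x,y) \;\leq\; d_{\orient{H}}(x,q) + d_{\orient{H}}(q,p) + d_{\orient{H}}(p,y) \;=\; d_{\orient{H}}(x,q) \;+\; 1 \;+\; d_{\orient{H}}(p,y),
\]
and then maximise the two outer summands independently, using the piecewise estimates (\ref{eqnFromp}) and (\ref{eqnToq}). So the whole argument collapses to two maximisations over the four strata $S_{i,i+1}$, $S_{h,h}$, $S_{i,i}$ with $i > h$, and $S_{i+1,i}$.

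First I would scan (\ref{eqnToq}) to bound $d_{\orient{H}}(x,q)$. The bound $2d-i$ on $x \in S_{i,i+1}$ is maximised at $i=1$, giving $2d-1$; the other three cells yield at most $h$, $2d-h-1$, and $d-1$ respectively, all dominated by $2d-1$ whenever $d \ge 1$. The assumption $k \ge 4$ enters exactly here, to guarantee that $S_{1,2}$ is nonempty: a shortest cycle of length at least $4$ through $pq$ forces a neighbour of $p$ other than $q$ to lie in $S_{1,2}$, so the worst case is genuinely realisable.

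Next I would repeat the scan for (\ref{eqnFromp}) to bound $d_{\orient{H}}(p,y)$. The cells $S_{i,i+1}$ and $S_{h,h}$ contribute at most $d-1$ and $h$. The bound $2d-2h-2+i$ on $S_{i,i}$ with $i > h$ is increasing in $i$ and peaks at $i=d$, giving $3d-2h-2$. The bound $4d-2h-2-i$ on $S_{i+1,i}$ is decreasing in $i$ and peaks at $i=1$ (again $S_{2,1}$ nonempty by $k \ge 4$, via the same short-cycle observation applied to $q$), giving $4d-2h-3$. A quick comparison shows $4d-2h-3 \ge 3d-2h-2$ since $d \ge 1$, so $d_{\orient{H}}(p,y) \le 4d-2h-3$ for every $y$.

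Substituting into the triangle inequality yields $d_{\orient{H}}(x,y) \le (2d-1) + 1 + (4d-2h-3) = 6d - 2h - 3$, which is the claimed bound. There is no real obstacle in the argument; the only bookkeeping point is to ensure that the extremal strata $S_{1,2}$ and $S_{2,1}$ are indeed inhabited so that the worst-case bounds are attained, and this is exactly what the hypothesis $k \ge 4$ supplies.
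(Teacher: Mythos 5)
Your proof is correct and follows essentially the same route as the paper: the triangle inequality through $q$ and $p$ using $d_{\orient{H}}(q,p)=1$, with the worst cases of (\ref{eqnToq}) and (\ref{eqnFromp}) realised at $x \in S_{1,2}$ and $y \in S_{2,1}$, giving $(2d-1)+1+(4d-2h-3)=6d-2h-3$. The paper states exactly this argument (in less detail) in the paragraph preceding the corollary.
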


\subsubsection*{Domination by $\boldsymbol{\orient{H}}$} 
	
	Let us call the vertices in $V(\orient{H})$ as \emph{captured} and those in
	$V(G) \setminus V(\orient{H})$ as \emph{uncaptured}.  For each $i \in \{1,
	0, -1\}$ let $L_i^c$ and $L_i^u$ denote the captured and uncaptured
	vertices in level i, respectively.  Since $L_i^c$ contains every level $i$
	vertex incident with a vertical edge, $L_i^c$ separates $L_i^u$ from rest
	of $G$. Let $d_i$ denote the maximum distance between a vertex in $L_i^u$
	and the set $L_i^c$. If $u_i \in L_i^u$ and $u_j \in L_j^u$ such that $d_G(u_i, L_i^c) = d_i$ and $d_G(u_j, L_j^c) = d_j$ for distinct $i, j \in \{1,0,-1\}$, the distance $d_G(u_i, u_j)$ is bounded
	above by $d$, the diameter of $G$, and bounded below by $d_i+ 1+d_j$. Hence
	$d_i + d_j \leq d-1$ for every distinct $i, j \in \{1,0,-1\}$.

	For any vertex $u \in L_0^u$, the last Level $0$ vertex in a shortest
	(undirected) $u\--q$ path is in $L_0^c$. Hence if Level $0$ is non-empty then $ d_0 \leq (d - h)$.  In
	order to bound $d_1$ and $d_{-1}$, we take a close look at a shortest cycle
	$C$ containing the edge $pq$. Let $C = (v_1, \ldots, v_{k}, v_1)$ with $v_1
	= q$ and $v_{k} = p$.  Each $v_i$ is in $S_{i,i-1}$ when $2i < k+1$,
	$S_{i-1,i-1}$ if $2i = k+1$ and $S_{k-i, k-i+1}$ when $2i > k+1$. Let $t =
	\ceil{k/4}$. The Level $-1$ vertex $v_t$ is special since it is at a distance
	$t$ from Level $1$ and thus $L_1^c$. If $u_1$ is a vertex in $L_1^u$ such that $d_G(u_1, L_1^c) = d_1$, the
	distance $d_G(u_1, v_t)$ is bounded above by $d$ and below by $d_1+t$.
	Hence $d_1 \leq d - t$.  Similarly we can see that $d_{-1} \leq (d-t)$.


	Putting all these distance bounds on domination together, we get the next
	lemma.

\begin{lemma}
\label{lemmaDominationByCore}

	Let $G$ be a $2$-edge connected graph, $pq$ be any edge of $G$ and let
	$\orient{H}$ be the oriented subgraph of $G$ returned by the algorithm {\sc
	OrientedCore}. For each $i \in \{1,0,-1\}$, let $d_i$ denote the maximum
	distance of a level $i$ vertex not in $V(\orient{H})$ to the set of level
	$i$ vertices in $V(\orient{H})$. Then $d_0 \leq d - \floor{k/2}$, $d_1,
	d_{-1} \leq d - \ceil{k/4}$ and for any distinct $i, j \in \{1, 0, -1\}$,
	$d_i + d_j \leq d-1$.

\end{lemma}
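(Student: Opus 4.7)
\emph{Proof proposal.}
The plan is to derive all three bounds from a common \emph{separation property}: by construction $V(\orient{H})$ contains every Level $i$ vertex incident with a vertical edge, so $L_i^c$ separates $L_i^u$ from every vertex outside Level $i$. I would combine this with the diameter bound and, for the $d_{\pm 1}$ estimate, with the minimality of $k$.

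For the pairwise bound $d_i + d_j \le d - 1$, I pick $u_i \in L_i^u$ and $u_j \in L_j^u$ attaining the two maxima and look at a shortest $u_i$--$u_j$ path $P$, of length at most $d$. Let $x_i$ be the last vertex of $P$ in Level $i$ and $x_j$ the first vertex of $P$ strictly after $x_i$ lying in Level $j$. By the separation property the exit edge at $x_i$ and the entry edge at $x_j$ are vertical, so $x_i \in L_i^c$ and $x_j \in L_j^c$; since $i \ne j$ they are distinct. The three subpaths $u_i$--$x_i$, $x_i$--$x_j$, $x_j$--$u_j$ of $P$ have lengths at least $d_i$, $1$, and $d_j$ respectively, yielding $d_i + 1 + d_j \le d$.

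For $d_0$, I first use the minimality of $k$ to rule out $S_{i,i}$ for $i < h$: any such $v$ would give a cycle through $pq$ of length $2i + 1 < k$ by splicing shortest $p$--$v$ and $v$--$q$ paths with the edge $pq$. Hence every Level $0$ vertex is at distance at least $h$ from $q$. Picking $u_0 \in L_0^u$ with $d_G(u_0, L_0^c) = d_0$ and following a shortest $u_0$--$q$ path, the last Level $0$ vertex $x$ on it exits Level $0$ via a vertical edge and so lies in $L_0^c$; the two pieces at $x$ have lengths at least $d_0$ and at least $h$, summing to at most $d$.

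For $d_1$ and $d_{-1}$, let $C = (v_1, \ldots, v_k, v_1)$ be a shortest cycle through $pq$ with $v_1 = q$ and $v_k = p$, and set $t = \ceil{k/4}$. Then $v_t \in S_{t, t-1}$ lies in Level $-1$. The crux is the structural inequality $d_G(v_t, L_1) \ge t$: if some $w \in L_1 \cap S_{i, i+1}$ had $d_G(w, v_t) \le t - 1$, splicing a shortest Level $1$ path $p$--$w$ (length $i$), a shortest $w$--$v_t$ walk, the arc $v_t v_{t-1} \cdots v_1$ of $C$ (length $t-1$), and the edge $qp$ would produce a closed walk through $pq$ of length at most $i + 2t - 1$. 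The triangle inequality through $q$ already gives $d_G(w, v_t) \ge i - t + 2 \ge t$ whenever $i \ge 2t - 2$; in the remaining range $1 \le i \le 2t - 3$ the walk has length at most $4t - 4 \le k - 1 < k$ and so yields a cycle through $pq$ shorter than $k$, contradicting minimality. (The boundary case $i = 0$ forces $w = p$, for which $d_G(p, v_t) = t$ holds directly.) Given this claim, the separation property forces every $u_1$--$v_t$ path to meet $L_1^c$, so $d_G(u_1, v_t) \ge d_1 + t$; combined with $d_G(u_1, v_t) \le d$ this gives $d_1 \le d - t$. The bound on $d_{-1}$ is symmetric, using $v_{k-t+1} \in S_{t-1, t} \subset L_1$ in place of $v_t$. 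I expect the structural claim $d_G(v_t, L_1) \ge t$ to be the main obstacle: the triangle inequality alone is insufficient, and one must verify carefully that the closed walk constructed in the tricky range actually contains a simple cycle of $G$ through $pq$ of length strictly less than $k$.
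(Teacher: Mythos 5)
Your proposal is correct and follows essentially the same route as the paper: the separation of $L_i^u$ by $L_i^c$ plus the diameter bound for the pairwise inequality $d_i+d_j\leq d-1$, the fact that every Level $0$ vertex is at distance at least $h$ from $q$ for the $d_0$ bound, and the special cycle vertex $v_t$ (resp.\ $v_{k-t+1}$) for $d_1$ and $d_{-1}$. The paper merely asserts the key claim $d_G(v_t, L_1)\geq t$, so your two-regime justification (triangle inequality through $q$ when $i\geq 2t-2$, minimality of $k$ when $i\leq 2t-3$) is a welcome elaboration, and the simple-cycle extraction you flag does go through: a $w$--$v_t$ path of length at most $t-1$ cannot itself use the edge $pq$ (that would force length at least $i+t$), so deleting the single traversal of $pq$ from your closed walk leaves a $q$--$p$ walk in $G\setminus\{pq\}$ of length at most $k-2$, yielding a cycle through $pq$ shorter than $k$.
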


\subsection{The Upper Bound}

Consider a $2$-edge connected graph $G$ with diameter $d$. Let $\eta(G)$ denote the smallest integer such that every edge of a graph $G$ belongs to a cycle
of length at most $\eta(G)$. Sun, Li, Li and Huang \cite{sun2014oriented} proved the following theorem. 

\begin{theorem}\cite{sun2014oriented}
$\ora{d}(G) \leq 2r(\eta-1)$ where $r$ is the radius of $G$ and $\eta = \eta(G)$. 

\end{theorem}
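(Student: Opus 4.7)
Let $v$ be a vertex of $G$ with eccentricity equal to the radius $r$, so that every vertex lies within distance $r$ of $v$. The plan is to build a strong orientation $\ora{G}$ in which the in-eccentricity and out-eccentricity of $v$ are both at most $r(\eta-1)$; the conclusion then follows because for any $u_1,u_2 \in V(G)$,
$d_{\ora{G}}(u_1,u_2) \le d_{\ora{G}}(u_1,v) + d_{\ora{G}}(v,u_2) \le 2r(\eta-1)$.

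I would construct $\ora{G}$ by growing a strongly connected oriented subgraph $\ora{H}$ starting from $\ora{H}=(\{v\},\emptyset)$, one cycle-extension at a time, with the target invariant that for every $x \in V(\ora{H})$,
\[
\max\{d_{\ora{H}}(v,x),\, d_{\ora{H}}(x,v)\} \le d_G(v,x)\,(\eta-1).
\]
At each step, pick an uncaptured vertex $w$ whose distance $k=d_G(v,w)$ is minimum over uncaptured vertices, take the last edge $uw$ of a shortest $v$-$w$ path in $G$ (so $u \in V(\ora{H})$ and $d_G(v,u)=k-1$), and use the definition of $\eta(G)$ to obtain a cycle $C$ of length at most $\eta$ through $uw$. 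Walking around $C$ from $u$ through $w$, let $u'$ be the first captured vertex encountered ($u'=u$ permitted) and let $P$ be the resulting subpath of $C$, of length at most $\eta-1$. I would orient $P$ from $u$ to $u'$ (a directed cycle when $u'=u$) and add it to $\ora{H}$; no orientation conflict arises because every edge of $P$ is new.

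The forward half of the invariant at a new $z \in V(P)$ is immediate: $d_{\ora{H}}(v,z) \le d_{\ora{H}}(v,u) + \text{dist}_P(u,z) \le (k-1)(\eta-1) + (\eta-1) = k(\eta-1) \le d_G(v,z)(\eta-1)$, using the inductive bound at $u$ together with $d_G(v,z)\ge k$. I expect the main obstacle to lie in the backward half: routing $z$ along the tail of $P$ to $u'$ and then through $\ora{H}$ to $v$ gives $d_{\ora{H}}(z,v) \le (\eta-1) + d_G(v,u')(\eta-1)$, so closing the induction demands $d_G(v,u') \le k-1$. This holds automatically whenever $u'$ was captured in a strictly earlier BFS layer, but may fail if $u'$ happens to be a layer-$k$ vertex that got incidentally picked up while processing some earlier ear.

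To circumvent this case I would process the BFS layers in strict order and, when extending into layer $k$, argue that one can always choose the starting edge $uw$ and the cycle $C$ so that the re-entry vertex $u'$ sits strictly below layer $k$; the justification should leverage the fact that any layer-$k$ vertex already in $V(\ora{H})$ was itself attached via a path emanating from an earlier-layer vertex, and these earlier-layer vertices provide the alternative return routes needed to restore the invariant. Once the invariant is maintained, the procedure terminates with $V(\ora{H})=V(G)$, and the resulting orientation satisfies the desired eccentricity bounds at $v$, yielding $\ora{d}(G) \le 2r(\eta-1)$.
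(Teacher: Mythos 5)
The paper offers no proof of this statement---it is imported verbatim from Sun, Li, Li and Huang \cite{sun2014oriented}---so I am judging your sketch on its own merits rather than against an in-paper argument. Your architecture (BFS layers from a central vertex $v$, ears carved out of the short cycles guaranteed by $\eta(G)$, the invariant $\max\{d_{\ora{H}}(v,x),d_{\ora{H}}(x,v)\}\le d_G(v,x)(\eta-1)$, and the final triangle inequality through $v$) is the right one, and you have correctly located the crux. But the crux is exactly where the sketch stops being a proof, and the proposed repair does not close it.

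The gap is this: when the arc of $C$ re-enters the captured set at a vertex $u'$ of the same layer $k$ (captured earlier, either incidentally by a deep ear from a previous layer or by an earlier ear of the current layer), the backward bound chains to $d_{\ora{H}}(z,v)\le(\eta-1)+k(\eta-1)=(k+1)(\eta-1)$, and such chains can accumulate. Your fix---``choose the starting edge $uw$ and the cycle $C$ so that $u'$ sits strictly below layer $k$''---is asserted, not proved, and it is not in general available: the cycle of length at most $\eta$ through a given edge is whatever the graph provides, and every captured neighbour of $w$ may lie in layer $k$. Worse, even when an alternative captured neighbour $x$ of $w$ in layer $k$ exists and the cycle through $xw$ does return to a lower layer, switching the starting edge merely trades the violation into the forward half: now $d_{\ora{H}}(v,z)\le d_{\ora{H}}(v,x)+(\eta-1)$, and the induction only guarantees $d_{\ora{H}}(v,x)\le k(\eta-1)$, not $(k-1)(\eta-1)$. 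A correct argument has to exploit the slack $d_{\ora{H}}(v,x)+d_{\ora{H}}(x,v)\le 2k(\eta-1)-(\eta-2)$ enjoyed by interior vertices of earlier ears, or---as in the Chv\'atal--Thomassen machinery reproduced in Lemmas~\ref{lemmaACT} and~\ref{lemmaCT} of this paper---reroute each new ear along previously oriented ears all the way back to the previous layer's set $B$ \emph{together with} a proof that the rerouted ear is still short. That rerouting-plus-length argument is the heart of the proof, and it is missing from your sketch. (A small cosmetic point: when $u'=u$ the path $P$ is the whole cycle and has length up to $\eta$, not $\eta-1$; your distance bounds for interior vertices survive, but the statement as written is off by one.)
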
  

We know that $r \leq d$ and hence we have $\ora{d}(G) \leq 2d(\eta-1)$ as our first bound. Let $pq$ be an edge in $G$ such that the length of a smallest cycle containing $pq$ is $\eta$. If $\eta \leq 3$, then $\ora{d}(G) \leq 4d$ which is smaller than the bound claimed in Theorem~\ref{betterthanct}. So we assume $\eta \geq 4$. By Corollary~\ref{ddcorollary}, $G$ has an oriented subgraph $\ora{H}$ with diameter at most $6d-2\floor{\frac{\eta}{2}}-3$. Moreover by Lemma~\ref{lemmaDominationByCore}, $\ora{H}$ is a $(d - \ceil{\frac{\eta}{4}})$-step dominating subgraph of $G$. Let $G_0$ be a graph obtained by contracting the vertices in $V(\ora{H})$ into a single vertex $v_H$. We can see that $G_0$ has radius at most $(d - \ceil{\frac{\eta}{4}})$. Thus by Theorem~\ref{tt1}, $G_0$ has a strong orientation $\ora{G_0}$ with radius at most $(d-\ceil{\frac{\eta}{4}})^2 + (d-\ceil{\frac{\eta}{4}})$. Since $d \leq 2r$, we have $d(\ora{G_0}) \leq 2(d-\ceil{\frac{\eta}{4}})^2 + 2(d-\ceil{\frac{\eta}{4}})$. Notice that $\ora{G_0}$ and $\ora{H}$ do not have any common edges. Hence $G$ has an orientation with diameter at most $2(d-\ceil{\frac{\eta}{4}})^2 + 2(d-\ceil{\frac{\eta}{4}}) + (6d- 2\floor{\frac{\eta}{2}}-3)$ by combining the orientations in $\ora{H}$ and $\ora{G_0}$. Let $\eta = 4 \alpha d$. Hence we get $\ora{d}(G) \leq \min\{8\alpha d^2 - 2d, 2(1-\alpha)^2d^2 + 8d-6\alpha d-1\}$. We can see that the dominant term in the first bound is $8 \alpha d^2$ while the dominant term in the second bound is at most $2(1-\alpha)^2 d^2$. Notice that $0 < \frac{3}{4d} \leq \alpha \leq \frac{2d+1}{4d} < 1$. Thus by optimizing for $\alpha$ in the range $(0,1)$, we obtain the following theorem.

\begin{theorem}\label{betterthanct}
$ f(d) \leq 1.373 d^2 + 6.971 d - 1  $.
\end{theorem}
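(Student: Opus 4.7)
The plan is to play off two monotone bounds against each other, parameterized by $\eta := \eta(G)$, and to minimize over $\eta$. The first bound is the Sun--Li--Li--Huang inequality $\ora{d}(G) \leq 2r(\eta-1) \leq 2d(\eta-1)$, which is strong when $\eta$ is small. The second, which I will derive from the \textsc{OrientedCore} machinery, is strong when $\eta$ is large. Since the first bound grows linearly in $\eta$ and the second will shrink quadratically with $\eta$, the worst case for the minimum lies at the crossover point, which I will locate by the substitution $\eta = 4\alpha d$.

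For the second bound, I would fix an edge $pq$ lying on a cycle of length exactly $\eta$. If $\eta \leq 3$ the first bound already gives $\ora{d}(G) \leq 4d$, well within the target, so I may assume $\eta \geq 4$. Running \textsc{OrientedCore} on $(G, pq)$ produces a $2$-edge connected oriented subgraph $\ora{H}$. By Corollary~\ref{ddcorollary}, $d(\ora{H}) \leq 6d - 2\floor{\eta/2} - 3$, and by Lemma~\ref{lemmaDominationByCore}, $V(\ora{H})$ is a $\bigl(d - \ceil{\eta/4}\bigr)$-step dominating set of $G$. Contracting $V(\ora{H})$ to a single vertex $v_H$ yields a $2$-edge connected graph $G_0$ of radius at most $d - \ceil{\eta/4}$. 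Applying Theorem~\ref{tt1} to $G_0$ produces a strong orientation $\ora{G_0}$ of radius at most $(d - \ceil{\eta/4})^2 + (d - \ceil{\eta/4})$ and hence of diameter at most twice that. Because the contracted edges are disjoint from $E(\ora{H})$, pulling $\ora{G_0}$ back to $G$ and gluing it to $\ora{H}$ yields an orientation of $G$ whose diameter is bounded by
\[
  2\bigl(d - \ceil{\eta/4}\bigr)^2 + 2\bigl(d - \ceil{\eta/4}\bigr) + 6d - 2\floor{\eta/2} - 3.
\]

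Writing $\eta = 4\alpha d$ and ignoring floors and ceilings, the two bounds simplify to roughly $8\alpha d^2 - 2d$ and $2(1-\alpha)^2 d^2 + (8 - 6\alpha)d - 3$. Their $d^2$-coefficients coincide when $8\alpha = 2(1-\alpha)^2$, i.e.\ $\alpha^2 - 6\alpha + 1 = 0$, whose root in $(0,1)$ is $\alpha = 3 - 2\sqrt{2}$. At this $\alpha$ the common $d^2$-coefficient equals $8(3 - 2\sqrt{2}) = 24 - 16\sqrt{2} \approx 1.3726$, and the dominant linear coefficient (coming from the second bound) equals $12\sqrt{2} - 10 \approx 6.9706$. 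Rounding these up gives the advertised $1.373\,d^2 + 6.971\,d - 1$.

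The main obstacle I anticipate is honest accounting for the integrality of $\eta$: the continuous optimum $\eta^\star = 4(3 - 2\sqrt{2})d$ is almost never an integer, and the feasible range is $3 \leq \eta \leq 2d+1$, i.e.\ $\alpha \in [\tfrac{3}{4d}, \tfrac{2d+1}{4d}]$. Because each of the two bounds is monotone in $\eta$, I can round $\eta^\star$ to the nearest integer in this interval without losing more than $O(d)$, which the extra linear slack absorbs; for very small $d$ (where $\eta^\star$ may fall outside the interval), the bound can be verified directly by inspection. Once this bookkeeping is pinned down, Theorem~\ref{betterthanct} follows.
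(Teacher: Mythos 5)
Your proposal follows essentially the same route as the paper: the Sun--Li--Li--Huang bound $2d(\eta-1)$ for small $\eta$, the \textsc{OrientedCore} subgraph (Corollary~\ref{ddcorollary} plus Lemma~\ref{lemmaDominationByCore}) contracted and fed to Theorem~\ref{tt1} for large $\eta$, the substitution $\eta = 4\alpha d$, and balancing at $\alpha = 3 - 2\sqrt{2}$. Your worry about rounding $\eta^\star$ is moot --- $\eta$ is determined by $G$, and one simply bounds $\min$ of the two functions at that $\alpha$ by its value at the crossover --- but this does not affect correctness.
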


For any $d \geq 8$, the above upper bound is an improvement over the upper bound of $2 d ^2 + 2d$ provided by Chv{\'a}tal and Thomassen.

\section{Oriented Diameter of Diameter $\boldsymbol{4}$ Graphs}

Throughout this section, we consider $G$ to be an arbitrary $2$-edge connected diameter $4$ graph. We will show that the oriented diameter of $G$ is at most $21$ and hence $f(4) \leq 21$. The following lemma by Chv{\'a}tal and Thomassen \cite{chvatal1978distances} is used when $\eta(G) \leq 4$.   

\begin{lemma}\label{LemmaforK}\cite{chvatal1978distances}
Let $\Gamma$ be a 2-edge connected graph. If every edge of $\Gamma$ lies in a cycle of length at most $k$, then it has an orientation $\ora{\Gamma}$ such that
     $$ d_{\ora{\Gamma}}(u,v) \leq ((k-2) 2^{\left \lfloor (k-1)/2 \right \rfloor} + 1)  d_{\Gamma}(u,v) \quad \forall u,v \in V(\ora{\Gamma}) $$
\end{lemma}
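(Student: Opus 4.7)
The plan is first to reduce the lemma to a per-edge bound and then to construct the desired orientation inductively, building up $\Gamma$ one short cycle at a time.

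If one can produce an orientation $\ora{\Gamma}$ for which every edge $uv \in E(\Gamma)$ satisfies $d_{\ora{\Gamma}}(u,v) \leq c := (k-2) 2^{\lfloor (k-1)/2 \rfloor} + 1$, then for arbitrary vertices $u,v$ one chains the inequality along a shortest undirected $u$-$v$ path in $\Gamma$ to obtain $d_{\ora{\Gamma}}(u,v) \leq c \cdot d_{\Gamma}(u,v)$. So the entire content of the lemma lies in achieving this pointwise bound $c$ for each edge.

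I would next prove the per-edge bound by induction on $k$. The base case $k=3$, in which every edge lies in a triangle, can be dispatched by a direct greedy argument: orient triangles one by one, committing to a consistent cyclic orientation whenever possible, and verify that every edge then has reverse directed distance at most $3 = c(3)$. For the inductive step I would seed an oriented subgraph $\ora{H}$ with an arbitrary short cycle of $\Gamma$ oriented cyclically, and then extend $\ora{H}$ by attaching ears: at each step, an unoriented edge $uv$ incident to $V(\ora{H})$, together with a length-at-most-$k$ cycle $C \ni uv$ supplied by the hypothesis, yields a sub-path $P$ of $C$ whose interior lies outside $V(\ora{H})$ and whose endpoints lie in $V(\ora{H})$; one orients $P$ so that it completes a short directed cycle together with a pre-existing directed path in $\ora{H}$, thereby controlling the reverse directed distance along the new edges.

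The hard part will be the distance accounting for pairs of already-oriented vertices after an ear insertion. Each new ear of length at most $k-1$ can, in the worst case, force the directed distance between two old vertices $x,y \in V(\ora{H})$ to detour through the ear, roughly doubling the relevant distance and adding at most $k-2$; unfolding this recurrence is precisely what produces the factor $2^{\lfloor (k-1)/2 \rfloor}$ and the multiplier $k-2$ in the stated bound. Making the accounting rigorous requires two careful choices at each insertion: selecting a cycle $C$ that meets $V(\ora{H})$ in a single maximal sub-path (so that only one ear is introduced per step, rather than several disjoint ones), and orienting that ear so that every directed distance it disrupts grows by no more than the inductive budget permits. The subsidiary claim that the procedure eventually orients every edge of $\Gamma$ follows from 2-edge connectedness together with the fact that any unoriented edge still sits in a short cycle that must intersect the growing $V(\ora{H})$.
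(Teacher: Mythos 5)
First, a point of reference: the paper does not prove Lemma~\ref{LemmaforK} at all --- it is imported verbatim from Chv\'atal and Thomassen \cite{chvatal1978distances} --- so your attempt has to be measured against their argument rather than against anything in this manuscript. Your opening reduction is correct and standard: if every edge $uv$ satisfies $d_{\ora{\Gamma}}(u,v)\leq c$ with $c=(k-2)2^{\lfloor (k-1)/2\rfloor}+1$, then summing along a shortest undirected $u$--$v$ path gives the stated multiplicative bound, and the ear-growing outline is indeed the right skeleton of the known proof.

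The gap is that the entire quantitative content of the lemma --- the derivation of the constant $c$ --- is labelled ``the hard part'' and then not carried out, and the mechanism you sketch for it points in the wrong direction. Adding an oriented ear to $\ora{H}$ can only (weakly) decrease directed distances between vertices already in $\ora{H}$, so nothing is ``disrupted''; what must be controlled is the return distance for the \emph{new} edges. If the ear $P\subseteq C$ runs from $x$ to $y$ with $x,y\in V(\ora{H})$ and is oriented from $x$ to $y$, each new edge needs a bound on $d_{\ora{H}}(y,x)$, and the only handle on $x$ and $y$ is that they lie on the short cycle $C$, whence $d_{\Gamma}(x,y)\leq k-|P|$. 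But that short connecting $x$--$y$ path need not lie inside the already-oriented subgraph, so you cannot simply apply the per-edge bound to it; arranging the construction so that this connecting path \emph{is} under inductive control is exactly where Chv\'atal and Thomassen do the work, and it is what produces the factor $2^{\lfloor (k-1)/2\rfloor}$ (roughly one doubling per two units of cycle length). Two further soft spots: you cannot in general ``select a cycle $C$ that meets $V(\ora{H})$ in a single maximal sub-path'' --- the hypothesis hands you one cycle of length at most $k$ per edge, and you must instead take the subpath of that cycle containing the new edge whose interior avoids $V(\ora{H})$; and the $k=3$ base case, ``orient triangles one by one, committing to a consistent cyclic orientation whenever possible,'' is not an argument, since the whole difficulty is what to do when a consistent cyclic orientation is \emph{not} possible. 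As it stands the proposal is a plausible road map for the Chv\'atal--Thomassen proof, but the specific bound in the lemma is nowhere established.
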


Hence if all edges of the graph $G$ lie in a $3$-cycle or a $4$-cycle, the oriented diameter of $G$ will be at most $20$. Hence we can assume the existence of an edge $pq$ which is not part of any $3$-cycle or $4$-cycle as long as we are trying to prove an upper bound of $20$ or more for $f(4)$. We apply algorithm {\sc OrientdCore} on $G$ with the edge $pq$ to obtain an oriented subgraph $\ora{H_1}$ of $G$. Fig.~\ref{fig1} shows a coarse representation of $\ora{H_1}$.     

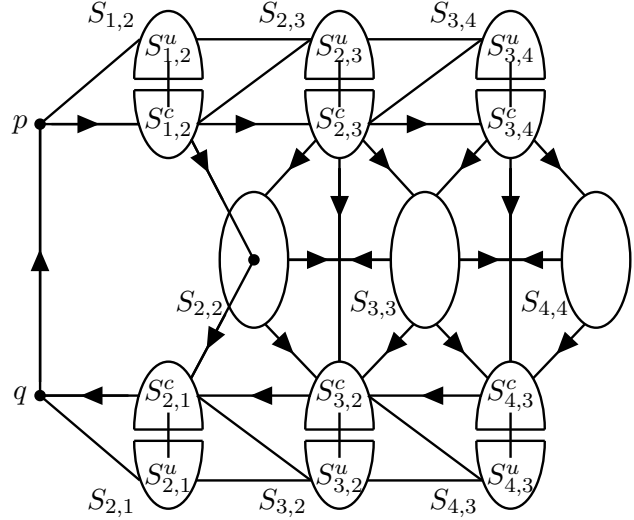
\begin{figure}[h]
    
    \centering
   
\begin{tikzpicture}[scale=0.75]
\begin{scope}[every node/.style={sloped,allow upside down}]
\draw[fill=black] (0.75,0.6) circle (0.08cm);
 
\draw[fill=black] (0.75,5.4) circle (0.08cm);

\draw (0.75,0.6)--(0.75,5.4);

\draw (0.75,0.84)--node {\midarrowa}(0.75,5.4);


\node at (3,6.7) {$S_{1,2}^u$};
\node at (6,6.7) {$S_{2,3}^u$};
\node at (9,6.7) {$S_{3,4}^u$};
\node at (3,-0.9) {$S_{2,1}^u$};
\node at (6,-0.9) {$S_{3,2}^u$};
\node at (9,-0.9) {$S_{4,3}^u$};

\draw (3,6.5)--(3,5.7);
\draw (6,6.5)--(6,5.7);
\draw (9,6.5)--(9,5.7);

\draw (3,-0.5)--(3,0.3);
\draw (6,-0.5)--(6,0.3);
\draw (9,-0.5)--(9,0.3);

\draw (2.4,6.2)--(3.6,6.2);
\draw (5.4,6.2)--(6.6,6.2);
\draw (8.4,6.2)--(9.6,6.2);

\draw (2.4,-0.2)--(3.6,-0.2);
\draw (5.4,-0.2)--(6.6,-0.2);
\draw (8.4,-0.2)--(9.6,-0.2);

\draw (0.75,5.4)--(2.5,6.9);
\draw (3.5,6.9)--(5.5,6.9);
\draw (6.5,6.9)--(8.5,6.9);

\draw (0.75,0.6)--(2.5,-0.9);
\draw (3.5,-0.9)--(5.5,-0.9);
\draw (6.5,-0.9)--(8.5,-0.9);

\draw (2.4,6) arc(180:360:0.6cm and 1.2cm);
\draw (3.6,6.2) arc(0:180:0.6cm and 1.2cm);

\draw (2.4,-0.2) arc(180:360:0.6cm and 1.2cm);
\draw (3.6,0) arc(0:180:0.6cm and 1.2cm);

\draw (5.4,6) arc(180:360:0.6cm and 1.2cm);
\draw (6.6,6.2) arc(0:180:0.6cm and 1.2cm);

\draw (5.4,-0.2) arc(180:360:0.6cm and 1.2cm);
\draw (6.6,0) arc(0:180:0.6cm and 1.2cm);

\draw (8.4,6) arc(180:360:0.6cm and 1.2cm);
\draw (9.6,6.2) arc(0:180:0.6cm and 1.2cm);

\draw (8.4,-0.2) arc(180:360:0.6cm and 1.2cm);
\draw (9.6,0) arc(0:180:0.6cm and 1.2cm);


\draw (4.5,3) ellipse (0.6cm and 1.2cm);
\draw (7.5,3) ellipse (0.6cm and 1.2cm);
\draw (10.5,3) ellipse (0.6cm and 1.2cm);


\draw (2.4,6)--(3.6,6);
\draw (5.4,6)--(6.6,6);
\draw (8.4,6)--(9.6,6);

\draw (2.4,0)--(3.6,0);
\draw (5.4,0)--(6.6,0);
\draw (8.4,0)--(9.6,0);


\draw[fill=black] (4.5,3) circle (0.08cm);
 

 


 
\draw (9.9,3)--node {\midarrowa}(8.6,3); 
\draw (8.1,3)--node {\midarrowa}(9.4,3); 
\draw (9.9,3)--(8.1,3); 
\draw (8.1,3)--(9.9,3); 

\draw (6.9,3)--node {\midarrowa}(5.6,3); 
\draw (5.1,3)--node {\midarrowa}(6.4,3); 

\draw (6.9,3)--(5.1,3); 
\draw (5.1,3)--(6.9,3); 

\draw (3.5,5.4)--(5.5,6.9); 
\draw (6.5,5.4)--(8.5,6.9); 

\draw (5.5,-0.9)--(3.5,0.6); 

\draw (8.5,-0.9)--(6.5,0.6); 

\draw (0.75,5.4)--(2.5,5.4); 
\draw (2.5,0.6)--(0.75,0.6); 
\draw (0.95,5.4)--node {\midarrowa}(2.5,5.4); 
\draw (2.3,0.6)--node {\midarrowa}(0.75,0.6); 

\draw (5.5,0.6)--node {\midarrowa}(3.5,0.6); 
\draw (3.5,5.4)--node {\midarrowa}(5.5,5.4); 

\draw (8.5,0.6)--node {\midarrowa}(6.5,0.6); 
\draw (6.5,5.4)--node {\midarrowa}(8.5,5.4); 

\draw (6,4.8)--(6,1.2); 
\draw (9,4.8)--(9,1.2); 



\draw (4.5,3)--(3.4,0.91);  
\draw (3.4,5.13)--(4.5,3);  

\draw (4,2.04)--node{\midarrowa}(3.4,0.91);  
\draw (3.4,5.13)--node{\midarrowa}(3.95,4.0649);  

\draw (7.3,1.86)--node {\midarrowa}(6.4,0.91); 

\draw (10.3,1.86)--node {\midarrowa}(9.4,0.91); 

\draw (6.4,5.13)--node {\midarrowa}(7.3,4.14); 

\draw (9.4,5.13)--node {\midarrowa}(10.3,4.14); 

\draw (5.6,5.13)--node {\midarrowa}(4.7,4.14); 

\draw (8.6,5.13)--node {\midarrowa}(7.7,4.14); 

\draw (4.7,1.86)--node {\midarrowa}(5.6,0.91); 

\draw (7.7,1.86)--node {\midarrowa}(8.6,0.91); 


\draw (6,4.62)--(6,1.2); 
\draw (6,4.62)--node {\midarrowa}(6,3); 

\draw (9,4.62)--(9,1.2); 
\draw (9,4.62)--node {\midarrowa}(9,3); 


\node at (0.4,5.4) {$p$};
\node at (3,5.4) {$S_{1,2}^c$};
\node at (6,5.4) {$S_{2,3}^c$};
\node at (9,5.4) {$S_{3,4}^c$};

\node at (0.4,0.6) {$q$};
\node at (3,0.6) {$S_{2,1}^c$};
\node at (6,0.6) {$S_{3,2}^c$};
\node at (9,0.6) {$S_{4,3}^c$};

\node at (3.55,2.2) {$S_{2,2}$};
\node at (6.6,2.2) {$S_{3,3}$};
\node at (9.6,2.2) {$S_{4,4}$};

\node at (2,-1.3) {$S_{2,1}$};
\node at (5,-1.3) {$S_{3,2}$};
\node at (8,-1.3) {$S_{4,3}$};

\node at (2,7.3) {$S_{1,2}$};
\node at (5,7.3) {$S_{2,3}$};
\node at (8,7.3) {$S_{3,4}$};



\end{scope}
\end{tikzpicture}

    \caption{A coarse representation of $\ora{H_1}$ which shows the orientation of edges between various subsets of $V(G)$. A single arrow from one part to another indicates that all the edges between these parts are oriented from the former to latter. A double arrow between two parts indicates that the edges between the two parts are oriented in either direction or unoriented. An unoriented edge between two parts indicate that no edge between these two parts are oriented.}
    \label{fig1}
\end{figure}
\subsection{Oriented Diameter and $\boldsymbol{2}$-Step Domination Property of $\boldsymbol{\ora{H_1}}$}
Let $\orient{H_1}$ be the oriented subgraph of $G$ returned by the algorithm {\sc OrientedCore}. Since the smallest cycle containing $pq$ is of length greater than or equal to $5$, by Corollary~\ref{ddcorollary}, we can see that the diameter of $\ora{H_1}$ is at most $17$. Moreover, from equations \ref{eqnFromp} and \ref{eqnToq} of Lemma~\ref{lemmaDistancesInCore}, we get the upper bounds on the distances of $\orient{H_1}$ in Table~\ref{tab1}. Hence, the following corollary.  
\begin{corollary}\label{coroh1od}
$d(\ora{H_1}) \leq 17$. Moreover $\forall w \in V(\ora{H_1})$, $d_{\orient{H_1}}(p,w)$ and  $d_{\orient{H_1}}(w,q)$ obey the bounds in  Table~\ref{tab1}.   
\end{corollary}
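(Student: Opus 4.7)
The proof plan is essentially a direct specialization of the general machinery already developed. Since $pq$ is assumed not to lie in any $3$-cycle or $4$-cycle, the smallest cycle through $pq$ has length $k \geq 5$, so $h = \floor{k/2} \geq 2$. In particular the hypothesis of Corollary~\ref{ddcorollary} (namely $k \geq 4$) is satisfied, so the diameter bound $d(\ora{H_1}) \leq 6d - 2h - 3$ applies. Substituting $d = 4$ and using $h \geq 2$, the right-hand side is maximized when $h = 2$, yielding $d(\ora{H_1}) \leq 6\cdot 4 - 2\cdot 2 - 3 = 17$. This handles the first assertion.

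For the table of distance bounds, I would simply plug $d = 4$ and $h \geq 2$ into the piecewise bounds of Lemma~\ref{lemmaDistancesInCore}. Concretely, equation~(\ref{eqnFromp}) becomes $d_{\ora{H_1}}(p,w) \leq i$ on $S_{i,i+1}$, $\leq h$ on $S_{h,h}$, $\leq 2 - 2h + 6 + i = i + 2$ (at worst, when $h=2$) on $S_{i,i}$ with $i > h$, and $\leq 14 - 2h - i \leq 10 - i$ on $S_{i+1,i}$. Similarly, equation~(\ref{eqnToq}) gives $d_{\ora{H_1}}(w,q) \leq 8 - i$ on $S_{i,i+1}$, $\leq h$ on $S_{h,h}$, $\leq 8 - i$ on $S_{i,i}$ with $i > h$, and $\leq i$ on $S_{i+1,i}$. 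These are exactly the entries that should populate the rows and columns of Table~\ref{tab1}.

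Since Table~\ref{tab1} is just the rendering of these substituted bounds, the corollary reduces to verifying that the uniform worst case $h = 2$ indeed dominates each entry for every admissible value of $h \in \{2, 3, 4\}$; this is immediate from the monotonicity of the expressions $6d - 2h - 3$, $2d - 2h - 2 + i$, and $4d - 2h - 2 - i$ in $h$ (all nonincreasing). There is no substantive obstacle: the real work was done in Lemma~\ref{lemmaDistancesInCore} and Corollary~\ref{ddcorollary}, and what remains here is simply recording the numerical consequences for $d = 4$ and $h \geq 2$ in tabular form.
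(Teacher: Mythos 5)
Your proposal is correct and follows the paper's own route exactly: the paper likewise obtains the diameter bound by substituting $d=4$ and $h\geq 2$ (since $pq$ lies in no $3$- or $4$-cycle, so $k\geq 5$) into Corollary~\ref{ddcorollary}, and fills Table~\ref{tab1} by the same specialization of equations~(\ref{eqnFromp}) and~(\ref{eqnToq}). (Only a cosmetic slip: your intermediate expression ``$2-2h+6+i$'' should read $6-2h+i$, but the evaluated bound $i+2$ at $h=2$ is right.)
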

\begin{remark}\label{rm1}
If $k > 5$ $(h > 2)$, then $S_{2,2}$ is empty. Moreover if $S_{2,2}$ is non-empty, then all the vertices in $S_{2,2}$ are captured into $\ora{H_1}$. 
\end{remark}
\begin{table}
  \begin{center}
      \begin{tabular}{| c || c | c | c | c | c | c | c | c | c |}
\hline

for $w$ in       & $S_{12}^c$  &  $S_{23}^c$  & $S_{34}^c$  &  $S_{22}$  &  $S_{33}^c$ & $S_{44}^c$ & $S_{21}^c$ & $S_{32}^c$ & $S_{43}^c$ \\\hline
$d_{\ora{H_1}}(p,w) \leq $ & 1  &  2  &  3 &   2  &  5  &  6 &   9 &  8 &  7   \\\hline
$d_{\ora{H_1}}(w,q) \leq $ & 7 & 6 & 5 & 2 & 5 & 4 & 1 & 2 & 3  \\
\hline
   
\end{tabular}
    \caption{Upper bounds on the distances of $\orient{H_1}$}
    \label{tab1}
  \end{center}
\end{table}
Furthermore, applying Lemma~\ref{lemmaDominationByCore} on $\ora{H_1}$ shows that $\ora{H_1}$ is a $2$-step dominating subgraph of $G$. Let $G_0$ be a graph obtained by contracting the vertices in $V(\ora{H_1})$ into a single vertex $v_H$. We can see that $G_0$ has radius at most $2$. Thus by Theorem~\ref{tt1}, $G_0$ has a strong orientation $\ora{G_0}$ with radius at most $6$. Since $d \leq 2r$, we have $d(\ora{G_0}) \leq 12$. Since $\ora{G_0}$ and $\ora{H_1}$ do not have any common edges we can see that $G$ has an orientation with diameter at most $29$ by combining the orientations in $\ora{H_1}$ and $\ora{G_0}$. But we further improve this bound to $21$ by constructing a $1$-step dominating oriented subgraph $\ora{H_2} $ of $G$. We propose the following asymmetric variant of a technique by  Chv\'atal and Thomassen \cite{chvatal1978distances} for the construction and analysis of $\ora{H_2}$. 

\subsection{Asymmetric Chv\'atal-Thomassen Lemma}
For any subset $A$ of $V(G)$, let $N(A)$ denote the set of all 
vertices with an edge incident on some vertex in $A$.
Let $H$ be a subgraph of $G$. An \emph{ear} of $H$ in $G$ is a sequence of edges $uv_1, v_1v_2, \ldots, v_{k-1}v_k, v_kv$ such that $u,v \in V(H)$, $k \geq 1$ and none of the vertices $v_1, \ldots , v_k$  and none of the edges in this sequence are in $H$. In particular we allow $u = v$.
 
\begin{lemma}[Asymmetric Chv\'atal-Thomassen Lemma]
\label{lemmaACT}
	Let $G$ be an undirected graph and let $A \subseteq B \subseteq V(G)$ such
	that 
	\begin{enumerate}[{\em(i)}]
		\item $B$ is a $k$-step dominating set in $G$,
		\item $G/B$ is $2$-edge connected, and 
		\item $N(A) \cup B$ is a $(k-1)$-step
		dominating set of $G$.
	\end{enumerate}
	Then there exists an oriented subgraph $\orient{H}$ of $G \setminus G[B]$ 
	such that
	\begin{enumerate}[{\em(i)}]
		\item $N(A) \setminus B \subseteq V(\orient{H})$ and hence $V(\orient{H}) \cup B$ is a $(k-1)$-step dominating set of $G$, and
		\item $\forall v \in V(\orient{H})$, we have $d_{\orient{H}}(A, v)
		 	\leq 2k$ and either $d_{\orient{H}}(v, A) \leq 2k$ or
		 	$d_{\orient{H}}(v, B \setminus A) \leq 2k-1$.
	\end{enumerate}

\end{lemma}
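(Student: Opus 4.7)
The plan is to mimic the inductive ear-decomposition argument behind Chv\'atal and Thomassen's proof of Theorem~\ref{tt1}, with two modifications: we run it on the contracted graph $G/B$ (which is $2$-edge connected by hypothesis~(ii)) so that ears attach to $B$ and to already-oriented vertices, and we track the out-distance from $A$ and the in-distances to $A$ and to $B\setminus A$ separately, in order to exploit the extra step of domination granted by hypothesis~(iii).

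First I would take an ear decomposition $b = H_0, H_1, \dots, H_m$ of $G/B$ rooted at the vertex $b$ obtained by contracting $B$, and lift each ear $E_j = H_j \setminus H_{j-1}$ back to $G$: this yields a walk in $G \setminus G[B]$ whose two endpoints $u_j, v_j$ lie in $B \cup V(E_1) \cup \dots \cup V(E_{j-1})$ and whose internal vertices lie in $V(G) \setminus B$. Since $B$ is $k$-step dominating by hypothesis~(i), every internal vertex has a path of length at most $k$ to $B$ in $G$; inserting such short off-ear paths into the decomposition where necessary lets us assume without loss of generality that every ear has length at most $2k+1$.

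Next I would process the ears in order, maintaining as induction hypothesis that every vertex $w$ already placed in $\orient{H}$ satisfies $d_{\orient{H}}(A,w) \leq 2k$ and either $d_{\orient{H}}(w,A) \leq 2k$ or $d_{\orient{H}}(w,B\setminus A) \leq 2k-1$, with tighter bounds (close to $k$) for vertices at the current ``boundary'' of $\orient{H}$. For an ear $E_j$ whose endpoints $u_j,v_j$ inductively satisfy these bounds, I would choose one of four canonical orientations of $E_j$: a directed path in either direction, a ``source'' in the middle pointing outward to both endpoints, or a ``sink'' in the middle reached from both endpoints. The choice is dictated by which of $u_j,v_j$ is better connected to $A$ in the out-direction and which is better connected to $A$ or $B\setminus A$ in the in-direction; the length bound $\leq 2k+1$ then ensures that the new internal vertices inherit the invariant. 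Capturing $N(A)\setminus B$ is automatic: every $w \in N(A)\setminus B$ is incident to some edge $aw$ with $a \in A \subseteq B$, which is not in $G[B]$ and hence lies in some ear $E_j$, so processing $E_j$ places $w$ into $V(\orient{H})$. The conclusion $V(\orient{H}) \cup B \supseteq N(A) \cup B$ then follows immediately from hypothesis~(iii).

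The main obstacle will be the case analysis for the four canonical orientations of each ear, and in particular justifying the asymmetric $2k-1$ bound on returns into $B\setminus A$. The saving of one step must come from the observation that a directed return path ending in $A$ necessarily uses its final edge to cross from a vertex of $N(A)\setminus A$ into $A$, whereas a return path into $B\setminus A$ can stop one edge earlier; hypothesis~(iii) is what makes this saving realizable, by guaranteeing a nearby $B$-vertex of the correct type for every vertex of interest. A second, subtler bookkeeping issue is to verify that the re-decomposition of long ears into pieces of length $\leq 2k+1$ is compatible with the sequential processing order, so that each newly inserted short ear still attaches to the portion of $\orient{H}$ already built without forcing us to re-orient previously committed edges.
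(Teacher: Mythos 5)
There is a genuine gap. Your plan starts from a \emph{complete} ear decomposition of $G/B$ and orients every ear, so that $V(\orient{H})$ ends up containing essentially all of $V(G)\setminus B$ (this is exactly why you can call the capture of $N(A)\setminus B$ ``automatic''). But conclusion (ii) demands $d_{\orient{H}}(A,v)\leq 2k$ for \emph{every} $v\in V(\orient{H})$, and this is unattainable for a vertex whose undirected distance to $A$ in $G$ already exceeds $2k$ --- such vertices exist in general, e.g.\ when $A$ is a single vertex of a large $k$-step dominating set $B$. No choice among your four canonical orientations can repair this; the invariant you propose to maintain is false the moment an ear is processed whose both attachment points are far from $A$. (The ``source in the middle'' and ``sink in the middle'' orientations are separately problematic, since they leave internal vertices with no directed path from, respectively to, the rest of $\orient{H}$.) The paper avoids all of this by \emph{not} decomposing $G/B$: it grows $\orient{H}$ one ear at a time, where each ear $Q_i$ is forced to begin with an edge $uv$ from $u\in A$ to a not-yet-captured $v\in N(A)\setminus B$, continues along a shortest $v\-- B$ path in $(G/B)\setminus\{uv\}$ (rerouted along the shorter arm of any previously oriented ear it meets), is oriented as a single directed path from $u$ towards $B$, and the process stops as soon as $N(A)\setminus B$ is covered. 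Because every ear has its source in $A$ and total length at most $2k+1$, the bound $d_{\orient{H}}(A,v)\leq 2k$ holds for every captured vertex with no inductive accounting at all, and $\orient{H}$ stays localized around $A$.

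Your explanation of the asymmetric $2k-1$ bound is also not the right mechanism: you attribute the saving to a return path into $A$ having to cross its last edge from $N(A)\setminus A$, which would be a statement independent of hypothesis (iii) and does not imply that stopping ``one edge earlier'' lands in $B\setminus A$. In the paper the saving is a statement about \emph{ear length}: the tail $(v_1,\ldots,v_q)$ of an ear is a shortest $v_1\-- B$ path in $(G/B)\setminus\{v_0v_1\}$, so $q\leq 2k+1$ in general, but if the terminal vertex $v_q$ lies in $B\setminus A$ then the $(k-1)$-step domination by $N(A)\cup B$ forces $q\leq 2k$; since $v$ is an internal vertex of the directed ear, this yields $d_{\orient{H}}(v,B\setminus A)\leq 2k-1$ directly. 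I recommend abandoning the global decomposition and rebuilding the argument around ears rooted in $A$.
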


\begin{proof}
	We construct a sequence $\orient{H}_0, \orient{H}_1, \ldots$ of oriented
	subgraphs of $G \setminus G[B]$ as follows. We start with $\orient{H}_0 =
	\emptyset$ and add an oriented $A\--B$ ear $\orient{Q}_i$ in each
	step. Let $i \geq 0$. If $N(A) \setminus B \subseteq V(\orient{H_i})$, then we stop the
	construction and set $\orient{H} = \orient{H}_i$. Since $N(A) \cup B$ is a
	$(k-1)$-step dominating set of $G$,  the first conclusion of the lemma is
	satisfied when the construction ends with $N(A) \setminus B \subseteq V(\orient{H})$.
	If $N(A) \setminus B \not\subseteq V(\orient{H_i})$, then let $v \in (N(A) \setminus B) \setminus V(\orient{H}_i)$ and let $u$ be a
	neighbour of $v$ in $A$. Since $G/B$ is $2$-edge connected, there exists a
	path in $G' = (G/B) \setminus \{uv\}$ from $v$ to $B$. Let $P_i$ be a
	shortest $v\--B$ path in $G'$ with the additional property that once
	$P_i$ hits a vertex in an oriented ear $\orient{Q}_j$ that was added in a
	previous step, $P_i$ continues further to $B$ along the shorter arm of
	$Q_j$.  It can be verified that $P_i$ is still a shortest $v\--B$
	path in $G'$. The ear $Q_i$ is the union of the edge $uv$ and the path
	$P_i$. If $P_i$ hits $B$ without hitting any previous ear, then we orient
	$Q_i$ as a directed path $\orient{Q_i}$ from $u$ to $B$. If $Q_i \cap Q_j
	\neq \emptyset$, then we orient $Q_i$ as a directed path $\orient{Q}_i$ by
	extending the orientation of $Q_i \cap Q_j$.  Notice that, in both these
	cases, the source vertex of $\orient{Q}_i$ is in $A$.  We add
	$\orient{Q}_i$ to $\orient{H}_i$ to obtain $\orient{H}_{i+1}$.  

	Let $Q_i = (v_0, \ldots, v_q)$ with $v_0 \in A$ and $v_q \in B$ be the ear
	added in the $i$-th stage above. Since $(v_1, \ldots, v_q)$ is a shortest
	$v_1\--B$ path in $G' = (G/B) \setminus \{v_0v_1\}$ and since $B$ is
	a $k$-step dominating set, $q \leq 2k + 1$. Moreover, if $v_q \in B
	\setminus A$, then $q \leq 2k$ since $N(A) \cup B$ is a $(k-1)$-step
	dominating set.  These bounds on the length of $Q_i$ along with the
	observation that the source vertex of $\orient{Q}_i$ is in $A$, verifies
	the second conclusion of the lemma.
\end{proof}

\begin{remark}\label{rm2}
If we flip the orientation of $\ora{H}$ we get the bounds $d_{\orient{H}}(v, A)
		 	\leq 2k$ and either $d_{\orient{H}}(A, v) \leq 2k$ or
		 	$d_{\orient{H}}(B \setminus A, v) \leq 2k-1$, $\forall v \in V(\orient{H})$ in place of Conclusion (ii) of Lemma~\ref{lemmaACT}.		 	
		 	
\end{remark}		 	

Setting $A = B$ in Lemma~\ref{lemmaACT} gives the key idea which is recursively employed by Chv{\'a}tal and Thomassen to prove Theorem~\ref{tt1} \cite{chvatal1978distances}. Notice from the above proof that, in this case
	$B \subseteq V(\orient{H})$. We can summarize their idea as follows. 

\begin{lemma}[Chv\'atal-Thomassen Lemma]
\label{lemmaCT}
	Let $G$ be an undirected graph and let $B \subseteq V(G)$ such that 
	\begin{enumerate}[{\em(i)}]
		\item $B$ is a $k$-step dominating set in $G$, and
		\item $G/B$ is $2$-edge connected.
	\end{enumerate}
	Then there exists an oriented subgraph $\orient{H}$ of $G \setminus G[B]$ 
	such that
	\begin{enumerate}[{\em(i)}]
		\item $V(\orient{H})$ is a $(k-1)$-step dominating set of $G$, and
		\item $\forall v \in V(\orient{H})$, we have $d_{\orient{H}}(B, v)
		\leq 2k$ and $d_{\orient{H}}(v, B) \leq 2k$.
	\end{enumerate}

\end{lemma}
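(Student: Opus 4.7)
The plan is to apply Lemma~\ref{lemmaACT} with the substitution $A = B$. First I would check that all three hypotheses of Lemma~\ref{lemmaACT} are met under this substitution: hypotheses (i) and (ii) are identical to those of Lemma~\ref{lemmaCT}, and for hypothesis (iii) I would note that $N(A) \cup B = N(B) \cup B$ is the set of vertices at distance at most $1$ from $B$, so since $B$ is $k$-step dominating, $N(B) \cup B$ is automatically $(k-1)$-step dominating.

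Next I would translate the two conclusions of Lemma~\ref{lemmaACT} into the required form. Conclusion (ii) of Lemma~\ref{lemmaACT} reads ``$d_{\orient{H}}(A,v) \leq 2k$ and either $d_{\orient{H}}(v,A) \leq 2k$ or $d_{\orient{H}}(v, B \setminus A) \leq 2k-1$''. With $A = B$, the set $B \setminus A$ is empty, so the alternative $d_{\orient{H}}(v, B \setminus A) \leq 2k-1$ is unattainable and one is forced into the case $d_{\orient{H}}(v,A) \leq 2k$. This yields exactly $d_{\orient{H}}(B,v) \leq 2k$ and $d_{\orient{H}}(v,B) \leq 2k$ for every $v \in V(\orient{H})$, which is conclusion (ii) of Lemma~\ref{lemmaCT}.

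For conclusion (i), Lemma~\ref{lemmaACT}(i) only delivers that $V(\orient{H}) \cup B$ is $(k-1)$-step dominating. To drop the ``$\cup B$'' and conclude that $V(\orient{H})$ itself is $(k-1)$-step dominating, I would revisit the construction of $\orient{H}$ inside the proof of Lemma~\ref{lemmaACT}: each ear $\orient{Q}_i$ has its source in $A$ and its sink in $B$, so when $A = B$ both endpoints of every ear already lie in $B \cap V(\orient{H})$. The one residual concern is that vertices of $B$ which are never required as endpoints of any ear might fail to be captured; I would close this gap by the harmless convention of adjoining every $b \in B$ to $\orient{H}$ as an isolated vertex. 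This enforces $B \subseteq V(\orient{H})$ without affecting any orientation or any minimum distance to/from $B$ inside $\orient{H}$ (isolated $b$'s simply satisfy $d_{\orient{H}}(b,B) = d_{\orient{H}}(B,b) = 0$, and the $\min$ over $B$ for other vertices is still attained by the non-isolated $B$-vertices). With $B \subseteq V(\orient{H})$ secured, $V(\orient{H}) \cup B = V(\orient{H})$ is $(k-1)$-step dominating, as required.

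The main obstacle I anticipate is precisely this last technicality of establishing $B \subseteq V(\orient{H})$: it is asserted in the remark preceding the lemma but is not literally immediate from the statement of Lemma~\ref{lemmaACT}, since the asymmetric version only guarantees $N(A)\setminus B \subseteq V(\orient{H})$. Everything else in the proof is a mechanical specialization of the already-proved asymmetric version.
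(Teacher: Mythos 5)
Your proposal is correct and follows essentially the same route as the paper, which derives Lemma~\ref{lemmaCT} by setting $A = B$ in Lemma~\ref{lemmaACT} and remarking that in this case $B \subseteq V(\orient{H})$. Your extra care in patching that containment (the paper merely asserts it, even though the ear construction only forces ear endpoints of $B$ into $V(\orient{H})$) and in noting that $B \setminus A = \emptyset$ collapses the disjunction in conclusion (ii) is a faithful, slightly more rigorous rendering of the same argument.
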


	Let $G$ be any $2$-edge connected graph with radius $r$. Chv\'atal and
	Thomassen showed that $\ora{d}(G) \leq 2r + 2(r-1) + \cdots + 2 = r(r+1)$ by
	$r$ applications of Lemma~\ref{lemmaCT}; starting with $B = \{v\}$, where
	$v$ is any central vertex of $G$ and $B$ in each subsequent application
	being the vertex-set of the oriented subgraph $\orient{H}$ returned by
	the current application.  

\subsection{A $\boldsymbol{1}$-Step Dominating Oriented Subgraph $\boldsymbol{\vec{H_2}}$ of $\boldsymbol{G}$}
Let $\ora{H_1}$ be the oriented subgraph of $G$ returned by the algorithm {\sc OrientedCore}. We will add further oriented ears to $\ora{H_1}$ to obtain a $1$-step dominating oriented subgraph $\ora{H_2}$ of  $G$. We have already seen that $\ora{H_1}$ is a $2$-step dominating oriented subgraph of $G$. By Lemma~\ref{lemmaDominationByCore}, we also have $d_i + d_j \leq 3$ for any distinct $i,j \in \{1,0,-1\}$.  

Now consider the first case where we have vertices in Level $0$ which are at a distance $2$ from $S_{2,2}$. Notice that in this case, $d_0 = 2$ and hence $d_1, d_{-1} \leq 1$. Let $B = L_{0}^c$, $A = S_{2,2}$ and $G_0 = G[L_0]$. By Remark~\ref{rm1}, $A \subseteq B$. Notice that $B = L_{0}^c$ is a cut-set separating $L_0^u$ from the rest of $G$ and hence the graph $G_0 / B $ is $2$-edge connected. Since $S_{3,3}^u \subseteq N(S_{2,2})$, we can see that $N(A) \cup B = N(S_{2,2}) \cup L_0^c $ is a $1$-step dominating subgraph of $G_0$. Therefore we can apply Lemma~\ref{lemmaACT} on $G_0$. Every edge of the oriented subgraph of $G_{0} \backslash G_{0}[B]$ obtained by applying Lemma~\ref{lemmaACT} is reversed to obtain the subgraph $\ora{H_2^0}$. Now consider the vertices captured into $\ora{H_2^0}$. From Lemma~\ref{lemmaACT} and Remark~\ref{rm2}, we get the following bounds $d_{\ora{H_2^0}}(v, A)
		 	\leq 4$ and either $d_{\ora{H_2^0}}(A, v) \leq 4$ or
		 	$d_{\ora{H_2^0}}(B \setminus A, v) \leq 3$, $\forall v \in V(\ora{H_2^0})$. Here $B \setminus A = S_{3,3}^c \cup S_{4,4}^c$ and from Table~\ref{tab1}, we have the bounds $d_{\ora{H_1}}(p, x) \leq 5$, $\forall x \in S_{3,3}^c$, $d_{\ora{H_1}}(p, y) \leq 6$, $\forall y \in S_{4,4}^c$ and $d_{\ora{H_1}}(p, z) = 2$, $\forall z \in S_{2,2}$. Hence $d_{\ora{H_1} \cup \ora{H_2^0}}(p, v) \leq 9$, $\forall v \in V(\ora{H_2^0})$. Since $d_{\ora{H_2^0}}(v, A) \leq 4$ and $d_{\ora{H_1}}(x, q) = 2$, $\forall x \in A$, we also have $d_{\ora{H_1} \cup \ora{H_2^0}}(v,q) \leq 6$, $\forall v \in V(\ora{H_2^0})$. Let $\ora{H_2} = \ora{H_1} \cup \ora{H_2^0}$. By the above discussion, in combination with the distances in Table~\ref{tab1} and Corollary~\ref{coroh1od}, we get the bounds in Table~\ref{tab2} for $d_{\ora{H_2}}(p,w)$ and  $d_{\ora{H_2}}(w,q)$ when $V(\ora{H_2^0}) \neq \phi$. Moreover, $d(\ora{H_2}) \leq 17$.

\begin{table}[h!]
  \begin{center}
      \begin{tabular}{| c || c | c | c | c | c | c | c | c | c |}
\hline

for $w$ in       & $S_{12}^c$  &  $S_{23}^c$  & $S_{34}^c$  &  $S_{22}$  &  $S_{33}^c$ & $S_{44}^c$ & $S_{21}^c$ & $S_{32}^c$ & $S_{43}^c$ \\\hline
$d_{\ora{H_2}}(p,w) \leq $ & 1  &  2  &  3 &   2  &  \textbf{9}  &  \textbf{9} &   9 &  8 &  7   \\\hline
$d_{\ora{H_2}}(w,q) \leq $ & 7 & 6 & 5 & 2 & \textbf{6} & \textbf{6} & 1 & 2 & 3  \\
\hline
   
\end{tabular}
    \caption{Upper bounds on the distances of $\ora{H_2}$ when $V(\ora{H_2^0}) \neq \phi$}
    \label{tab2}
  \end{center}
\end{table}

Now consider the second case where $d_1 = 2$ or $d_{-1} = 2$. Since $d_1 + d_{-1} \leq 3$, uncaptured vertices at a distance $2$ from $H_1$ can exist either in Level $1$ or in Level $-1$ but not both. By flipping the role of the vertices $p$ and $q$ in Algorithm {\sc OrientedCore} if necessary, without loss of generality, we can assume vertices which are at a distance 2 from $\ora{H_1}$ exists only in Level $-1$ and not in Level $1$. Let $G_{-1}=G[L_{-1}]$ and $B = L_{-1}^c$. Further let $r$ be any vertex in $S_{1,2}^c$ and $A = \{v \in B: d_{G}(r,v)=2\}$. Since $q\in A$, $A$ is never empty. Note that $A \subseteq B \subseteq V(G_{-1})$. Also $G_{-1} / B$ is $2$-edge connected since $B = L_{-1}^c$ is a cut-set which separates $L_{-1}^u$ from the rest of $G$. Now consider a vertex $z$ in Level $-1$ which is exactly at a distance $2$ from $B$. Since the graph $G$ is of diameter $4$, there exists a $4$-length path $P$ from $z$ to $r$. Since $B$ separates $L_{-1}^u$ from $r$, $P$ intersects $B$, say at a vertex $b$. Further, we have $d_G(b,r) = 2$ and thus $b \in A$. Hence $z$ has a $2$-length path to a vertex $b \in A$. Thus $N(A) \cup B$ is a $1$-step dominating subgraph of $G_{-1}$. Hence we can apply Lemma~\ref{lemmaACT} on $G_{-1}$ to obtain $\ora{H_2^{-1}}$, an oriented subgraph of $G_{-1} \backslash G_{-1}[B]$. Now consider the vertices captured into $\ora{H_2^{-1}}$. From Lemma~\ref{lemmaACT}, we get the following bounds $\forall v \in V(\ora{H_2^{-1}})$, $d_{\ora{H_2^{-1}}}(A, v)
		 	\leq 4$ and $d_{\ora{H_2^{-1}}}(v, B) \leq 4$. Since $d_{\ora{H_1}}(x,q) \leq 3$, $\forall x \in B$, we have $d_{\ora{H_1} \cup \ora{H_2^{-1}}}(v,q) \leq 7$, $\forall v \in V(\ora{H_2^{-1}})$. Vertices in $A$ can be from $S_{2,1}^c$, $S_{3,2}^c$ or $\{q\}$. By the definition of $A$ there is an undirected path in $G$ of length $3$ from $p$ to any vertex $v_a$ in $(A \setminus \{q\})$, going through $r$. It can be verified that this undirected path is oriented from $p$ to $v_a$ by Algorithm {\sc OrientedCore}. Hence $d_{\ora{H_1}}(p,v_a) \leq 3$, $\forall v_a \in (A \setminus \{q\})$ and hence $\forall v \in V(\ora{H_2^{-1}})$ with $d_{\ora{H_2^{-1}}}(A \setminus \{q\}, v) \leq 4$, $d_{\ora{H_1} \cup \ora{H_2^{-1}}}(p,v) \leq 7$. But if a vertex $v \in V(\ora{H_2^{-1}})$ has $d_{\ora{H_2^{-1}}}(A \setminus \{q\}, v) > 4$, then $d_{\ora{H_2^{-1}}}(q,v) \leq 4$. In this case, since $d_{\ora{H_1}}(p,q) \leq 8$, we get $d_{\ora{H_1} \cup \ora{H_2^{-1}}}(p,v) \leq 12$. Notice that this is the only situation where $d_{\ora{H_1} \cup \ora{H_2^{-1}}}(p,v) > 9$ and in this particular case $d_{\ora{H_2^{-1}}}(q,v) \leq 4$. 
		 	 
Now consider two vertices $x, y \in V(\ora{H_1} \cup \ora{H_2^{-1}})$. We can see that $d_{\ora{H_1} \cup \ora{H_2^{-1}}}(x,y)$ $ \leq d_{\ora{H_1} \cup \ora{H_2^{-1}}}(x,q) + d_{\ora{H_1} \cup \ora{H_2^{-1}}}(q,y)$. We have already proved that $d_{\ora{H_1} \cup \ora{H_2^{-1}}}(x,q) \leq 7$. Now let us consider the $q-y$ path. If $y \in V(\ora{H_1})$, from Table~\ref{tab1}, we can see that $d_{\ora{H_1}}(p,y) \leq 9$ and therefore $d_{\ora{H_1} \cup \ora{H_2^{-1}}}(x,y) \leq 17$. Now suppose if $y \in (V(\ora{H_2^{-1}}) \setminus V(\ora{H_1}))$. In this case we have already shown that $d_{\ora{H_2^{-1}}}(p,y) \leq 9$ or $d_{\ora{H_2^{-1}}}(q,y) \leq 4$. So, we either have a directed path of length $10$ from $q$ to $y$ through $p$ or a directed path of length $4$ to $y$ directly from $q$. Hence, $d_{\ora{H_1} \cup \ora{H_2^{-1}}}(x,y) \leq 17$. Let $\ora{H_2} = \ora{H_1} \cup \ora{H_2^{-1}}$. By the above discussion, we get the bounds in Table~\ref{tab3} for $d_{\ora{H_2}}(p,w)$ and  $d_{\ora{H_2}}(w,q)$ when $V(\ora{H_2^{-1}}) \neq \phi$. Moreover, $d(\ora{H_2}) \leq 17$.

\begin{table}[h!]
  \begin{center}
      \begin{tabular}{| c || c | c | c | c | c | c | c | c | c |}
\hline

for $w$ in       & $S_{12}^c$  &  $S_{23}^c$  & $S_{34}^c$  &  $S_{22}$  &  $S_{33}^c$ & $S_{44}^c$ & $S_{21}^c$ & $S_{32}^c$ & $S_{43}^c$ \\\hline
$d_{\ora{H_2}}(p,w) \leq $ & 1  &  2  &  3 &   2  &  5  &  6 &   \textbf{12} &  \textbf{12} &  \textbf{12}   \\\hline
$d_{\orient{H_2}}(w,q) \leq $ & 7 & 6 & 5 & 2 & 5 & 4 & \textbf{7} & \textbf{7} & \textbf{7}  \\
\hline
   
\end{tabular}
    \caption{Upper bounds on the distances of $\ora{H_2}$ when $V(\orient{H_2^{-1}}) \neq \phi$}
    \label{tab3}
  \end{center}
\end{table}   

In both the cases we get an oriented subgraph $\ora{H_2}$ of $G$ with $d(\ora{H_2}) \leq 17$. Moreover, it is clear from Conclusion (i) of Lemma~\ref{lemmaACT} that $\ora{H_2}$ is a $1$-step dominating subgraph of $G$. Hence the following Lemma.
\begin{lemma}\label{lemmaH2}
For every $2$-edge connected graph $G$ with diameter $4$ and $\eta(G) \geq 5$, there exists a $1$-step dominating oriented subgraph $\ora{H_2}$ of $G$ with $d(\ora{H_2}) \leq 17$.   
\end{lemma}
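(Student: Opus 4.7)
The plan is to build $\ora{H_2}$ by starting from the oriented subgraph $\ora{H_1}$ produced by {\sc OrientedCore} and grafting on additional oriented ears supplied by the Asymmetric Chv\'atal--Thomassen Lemma (Lemma~\ref{lemmaACT}). Because $\eta(G) \geq 5$, the algorithm is run on an edge $pq$ that lies in no cycle of length at most $4$; Corollary~\ref{coroh1od} then gives $d(\ora{H_1}) \leq 17$ together with Table~\ref{tab1}, and Lemma~\ref{lemmaDominationByCore} gives that $\ora{H_1}$ is $2$-step dominating with $d_i + d_j \leq 3$ for distinct $i, j \in \{1, 0, -1\}$. Unless $\ora{H_1}$ is already $1$-step dominating, some uncaptured vertex sits at distance exactly $2$ from $V(\ora{H_1})$, and the inequality $d_i + d_j \leq 3$ forces this to happen either in Level $0$ alone, or in exactly one of Levels $\pm 1$ (while Level $0$ is $1$-step dominated).

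In the first case I would work inside $G_0 = G[L_0]$ and apply Lemma~\ref{lemmaACT} with $B = L_0^c$ and $A = S_{2,2}$; the hypotheses hold because $L_0^c$ separates $L_0^u$ from the rest of $G$ (so $G_0/B$ is $2$-edge connected), $B$ is $2$-step dominating in $G_0$ by the case assumption, and $S_{3,3}^u \subseteq N(S_{2,2})$ makes $N(A) \cup B$ a $1$-step dominating set of $G_0$. Reversing the orientation of the resulting subgraph (Remark~\ref{rm2}) and pasting it onto $\ora{H_1}$ produces $\ora{H_2}$; the distance bounds from Lemma~\ref{lemmaACT} compose with those of Table~\ref{tab1} to yield Table~\ref{tab2} and in particular $d(\ora{H_2}) \leq 17$.

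In the second case---after exploiting the $p \leftrightarrow q$ symmetry of {\sc OrientedCore} to place the distance-$2$ uncaptured vertices in Level $-1$---I would take $B = L_{-1}^c$ and $A = \{v \in B : d_G(r, v) = 2\}$ for a fixed $r \in S_{1,2}^c$, and apply Lemma~\ref{lemmaACT} to $G_{-1} = G[L_{-1}]$. The crucial verification here is that $N(A) \cup B$ is $1$-step dominating in $G_{-1}$: any Level $-1$ vertex at distance $2$ from $B$ has a length-$4$ path in $G$ to $r$, and this path must cross $B$ at a vertex at distance exactly $2$ from $r$, that is, at a vertex of $A$. Attaching the resulting ears $\ora{H_2^{-1}}$ to $\ora{H_1}$ produces $\ora{H_2}$; the inheritance bound $d_{\ora{H_1}}(p, v_a) \leq 3$ for each $v_a \in A \setminus \{q\}$, coming from the oriented length-$3$ path $p \to r \to \cdot \to v_a$, is what lets us bound $d_{\ora{H_2}}(p, v)$ for most $v \in V(\ora{H_2^{-1}})$.

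The main obstacle is verifying $d(\ora{H_2}) \leq 17$ in this second case, because those vertices $v \in V(\ora{H_2^{-1}})$ with $d_{\ora{H_2^{-1}}}(A \setminus \{q\}, v) > 4$ only admit the weaker bound $d_{\ora{H_2}}(p, v) \leq 12$ via a detour through $q$. To finish I would route every $x\--y$ walk through $q$: one always has $d_{\ora{H_2}}(x, q) \leq 7$, and for any such bad $y$ the fallback $d_{\ora{H_2^{-1}}}(q, y) \leq 4$ (guaranteed by the second conclusion of Lemma~\ref{lemmaACT}) gives $d_{\ora{H_2}}(q, y) \leq 10$, so $d_{\ora{H_2}}(x, y) \leq 17$ in every subcase. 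The $1$-step domination of $\ora{H_2}$ in both cases is immediate from Conclusion (i) of Lemma~\ref{lemmaACT}.
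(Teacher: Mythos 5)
Your proposal follows the paper's own proof essentially step for step: the same case split driven by $d_i+d_j\leq 3$, the same choices of $A$ and $B$ in each case (with the orientation reversal via Remark~\ref{rm2} in the Level $0$ case), the same verification that $N(A)\cup B$ is $1$-step dominating via a length-$4$ path to $r$ crossing $B$ in $A$, and the same final routing of every $x\--y$ pair through $q$ with the $7+10$ fallback for the bad vertices. It is correct and not a different route.
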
    

\subsection{The Upper Bound}
Now the main theorem of the section follows.               

\begin{theorem}\label{betterfor4}
$f(4) \leq 21$.
\end{theorem}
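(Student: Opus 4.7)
The plan is to split the proof into two cases depending on $\eta(G)$, the smallest integer such that every edge of $G$ lies in a cycle of length at most $\eta(G)$.

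First, suppose $\eta(G) \leq 4$. Then every edge of $G$ lies in a cycle of length at most $4$, so Lemma~\ref{LemmaforK} applied with $k = 4$ yields an orientation $\ora{G}$ with $d_{\ora{G}}(u,v) \leq (2 \cdot 2^{1} + 1)\, d_{G}(u,v) = 5\, d_{G}(u,v)$ for every pair $u,v$. Since $d_G(u,v) \leq 4$, this gives $\ora{d}(G) \leq 20 \leq 21$, so this case is immediate.

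The interesting case is $\eta(G) \geq 5$, where there exists an edge $pq$ avoiding all $3$-cycles and $4$-cycles, exactly the hypothesis that lets us invoke Lemma~\ref{lemmaH2}. This gives a $1$-step dominating oriented subgraph $\ora{H_2}$ of $G$ with $d(\ora{H_2}) \leq 17$. I would then finish by the standard Chv\'atal--Thomassen ``contract-and-orient'' step: form $G_1$ from $G \setminus E(\ora{H_2})$ by contracting $V(\ora{H_2})$ into a single vertex $v_H$, discarding any loops and parallel edges that arise. Because $\ora{H_2}$ is $1$-step dominating, every vertex of $G_1$ other than $v_H$ is adjacent to $v_H$, so $G_1$ has radius at most $1$. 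Since $G$ is $2$-edge connected, so is $G_1$. Theorem~\ref{tt1} then produces a strong orientation $\ora{G_1}$ of radius at most $1^2 + 1 = 2$, and hence diameter at most $4$.

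Finally, lift $\ora{G_1}$ back to the corresponding edges of $G$ and orient any remaining edges (including parallel edges that were collapsed in $G_1$ and any edges within $V(\ora{H_2})$ that did not appear in $\ora{H_2}$) arbitrarily. Since $\ora{G_1}$ and $\ora{H_2}$ are edge-disjoint by construction, this gives a well-defined orientation $\ora{G}$ of $G$. For any $u,v \in V(G)$: if $u \notin V(\ora{H_2})$ then $u$ reaches some $u' \in V(\ora{H_2})$ in at most $2$ directed steps via $\ora{G_1}$ (and trivially in $0$ steps otherwise); symmetrically some $v' \in V(\ora{H_2})$ reaches $v$ in at most $2$ steps; and $u'$ reaches $v'$ inside $\ora{H_2}$ in at most $17$ steps. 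Thus $d_{\ora{G}}(u,v) \leq 2 + 17 + 2 = 21$.

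The main obstacle in this scheme is Lemma~\ref{lemmaH2}, which packages all the intricate casework on level distances and the asymmetric Chv\'atal--Thomassen construction. Once that lemma is in hand, the rest of Theorem~\ref{betterfor4} is a clean two-line assembly; the only routine checks needed are the $2$-edge connectivity of $G_1$ (inherited from $G$) and the disjointness of the two edge sets being combined.
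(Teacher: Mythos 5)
Your proposal is correct and follows essentially the same route as the paper: the $\eta(G)\leq 4$ case is dispatched via Lemma~\ref{LemmaforK} exactly as in the paper's preliminary discussion, and the $\eta(G)\geq 5$ case uses Lemma~\ref{lemmaH2} followed by the same contract-and-orient step with Theorem~\ref{tt1}, yielding $2+17+2=21$. Your explicit checks of the $2$-edge connectivity of the contracted graph and of edge-disjointness are the right routine verifications, which the paper leaves implicit.
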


\begin{proof}
By Lemma~\ref{lemmaH2}, we get a $1$-step dominating oriented subgraph $\ora{H_2}$ of $G$ with $d(\ora{H_2}) \leq 17$. Let $G_0$ be a graph obtained by contracting the vertices in $V(\ora{H_2})$ into a single vertex $v_H$. We can see that $G_0$ has radius at most $1$. Thus by Theorem~\ref{tt1}, $G_0$ has a strong orientation $\ora{G_0}$ with radius at most $2$. Since $d \leq 2r$, we have $d(\ora{G_0}) \leq 4$. Notice that $\ora{G_0}$ and $\ora{H_2}$ do not have any common edges. Now we can see that $G$ has an orientation with diameter at most $21$ by combining the orientations in $\ora{H_2}$ and $\ora{G_0}$.
\end{proof}


\end{document}